\newcommand{\defi}[1]{{\upshape\sffamily #1}}
\renewcommand{\ll}{\lambda}
\newcommand{\mc}[1]{\mathcal{#1}}
\newcommand{\mf}[1]{\mathfrak{#1}}
\newcommand{\ul}[1]{\underline{#1}}
\newtheorem{theorem}{Theorem}[section]
\newtheorem*{theorem*}{Theorem}
\newtheorem*{problem*}{Problem}
\newtheorem{lemma}[theorem]{Lemma}
\newtheorem{corollary}[theorem]{Corollary}
\newtheorem*{corollary*}{Corollary}
\theoremstyle{definition}
\newtheorem{definition}[theorem]{Definition}
\newtheorem*{definition*}{Definition}
\newtheorem{example}[theorem]{Example}
\newtheorem{problem}[theorem]{Problem}
\newtheorem{question}[theorem]{Question}
\theoremstyle{remark}
\newtheorem{remark}[theorem]{Remark}
\newtheorem*{remark*}{Remark}
\numberwithin{equation}{section}
\begin{document}

\title{Feasibility criteria for high-multiplicity partitioning problems}

\author{Claudiu Raicu}
\address{Department of Mathematics, University of Notre Dame, 255 Hurley, Notre Dame, IN 46556\newline
\indent Institute of Mathematics ``Simion Stoilow'' of the Romanian Academy}
\email{craicu@nd.edu}

\subjclass[2010]{Primary 90C27, 05D99}

\date{\today}

\keywords{Partitioning problem, bin packing, feasibility criteria}

\begin{abstract} For fixed weights $w_1,\cdots,w_n$, and for $d>0$, we let $\mc{B}$ denote a collection of $d\cdot n$ balls, with $d$ balls of weight $w_i$ for each $i=1,\cdots,n$. We consider the problem of assigning the balls to $n$ bins with capacities $C_1,\cdots,C_n$, in such a way that each bin is assigned $d$ balls, without exceeding its capacity. When $d\gg 0$, we give sufficient criteria for the feasibility of this problem, which coincide up to explicit constants with the natural set of necessary conditions. Furthermore, we show that our constants are optimal when the weights~$w_i$ are distinct. The feasibility criteria that we present here are used elsewhere (in commutative algebra) to study the asymptotic behavior of the Castelnuovo--Mumford regularity of symmetric monomial ideals.
\end{abstract}

\maketitle

\section{Introduction}

Partitioning problems, sometimes referred to as the simplest NP-hard problems \cite{hayes}, are fundamental questions in combinatorial optimization with applications to a wide range of resource-allocation problems such as multi-processor scheduling, equitable distribution of assets, voting theory etc. They are concerned with the problem of partitioning a multi-set $\mc{B}$ of resources into parts in order to optimize a certain objective function. The general philosophy (made precise in \cite{mertens}) is that the difficulty of a partitioning problem is controlled by the ratio $M/N$, where $M$ is the maximal size of a resource in $\mc{B}$, and $N$ is the size of $\mc{B}$, with smaller ratios corresponding to simpler problems. We illustrate this philosophy here by studying feasibility conditions for a certain partitioning problem, where $M$ is fixed and $N\to\infty$. The specific problem that we consider is, perhaps surprisingly, motivated by a question in commutative algebra, namely that of describing the asymptotic behavior of the Castelnuovo--Mumford regularity of powers of monomial ideals that are invariant under the symmetric group of permutations of the coordinates (see \cite[Section~5]{raicu-sn-invariant} for an explanation of this connection). As we will see, the problem is quite natural from the point of view of combinatorial optimization. It fits in the realm of high-multiplicity optimization problems, that have been studied extensively in operations research and computer science \cites{fernandez-lueker,hoc-sha,mc-sma-spi,fil-agn,goe-rot}.

We fix a positive integer $n$, and a tuple of non-negative integers $\ul{w} = (w_1,\cdots,w_n)$, which we call \defi{weights}. For a positive integer $d$ and a tuple of integer \defi{capacities} $\ul{C} = (C_1,\cdots,C_n)$, we consider the \defi{partitioning problem $\mf{BP}(d,\ul{C};\ul{w})$} defined as follows: given a collection of $d\cdot n$ balls, with $d$ of weight $w_i$ for each $i=1,\cdots,n$, and given bins $\mc{B}_1,\cdots,\mc{B}_n$ with capacities $C_1,\cdots,C_n$, determine if there exists an assignment of the balls to bins such that for each $i$ we have that $\mc{B}_i$ contains exactly $d$ balls, and the total weight of the balls in $\mc{B}_i$ does not exceed $C_i$. We say that $\mf{BP}(d,\ul{C};\ul{w})$ is \defi{feasible} if such an assignment exists. An equivalent formulation of the problem is obtained by considering the multiset of ball-weights
\[\mc{B} = \{w_1,\cdots,w_1,\cdots,w_i,\cdots,w_i,\cdots,w_n,\cdots,w_n\},\]
where each $w_i$ is repeated $d$ times, and asking whether there exists a multi-set partition
\begin{equation}\label{eq:partition-B}
\mc{B} = \mc{B}_1 \sqcup \cdots \sqcup \mc{B}_i \sqcup \cdots \sqcup \mc{B}_n,
\end{equation}
such that each $\mc{B}_i$ has exactly $d$ elements, and
\begin{equation}\label{eq:constraints-partitioning}
w(\mc{B}_i):=\sum_{w\in\mc{B}_i} w \leq C_i,\mbox{ for }i=1,\cdots,n.
\end{equation}
Any partition $\mc{B}_{\bullet}$ satisfying the constraints (\ref{eq:constraints-partitioning}) is said to be \defi{feasible}. Throughout this article we will use interchangeably the bin and multi-set partitioning terminology. Our goal is to find sufficient conditions for the feasibility of $\mf{BP}(d,\ul{C};\ul{w})$ when $d\gg 0$, and to explain the extent to which these conditions are optimal. 

We begin with an example of a feasible partitioning problem that we will return to throughout this article:

\begin{example}\label{ex:feasible-BP}
  Let $n=6$, $\ul{w}=(5,5,3,1,1,0)$, $d=6$, and $\ul{C}=(17,17,17,17,17,8)=(17^5,8)$. The following partition $\mc{B}_{\bullet}$ gives a solution to $\mf{BP}(d,\ul{C};\ul{w})$: the notation $w_1^{a_1}w_2^{a_2}\cdots$ means that we use $a_1$ balls of weight $w_1$, $a_2$ balls of weight $w_2$, etc. Note that since $w_1=w_2=5$, the total number of balls of weight $5$ is $2\cdot d=12=1+2+3+3+3+0$; a similar comment applies to $w_4=w_5=1$.
\[
\setlength{\extrarowheight}{2pt}
\begin{array}{c|c|c|c|c|c|c}
a & 1 & 2  & 3 & 4 & 5 & 6 \\
 \hline 
\mc{B}_a & 5^1 3^3 1^2 & 5^2 3^2 1^1 0^1 & 5^3 1^2 0^1 & 5^3 1^1 0^2 & 5^3 1^2 0^1 & 3^1 1^4 0^1\\ 
 \hline 
w(\mc{B}_a) & 16 & 17 & 17  & 16 & 17 & 7 \\ 
\end{array}
\]
\end{example}

\noindent The simplest case when the feasibility of $\mf{BP}(d,\ul{C};\ul{w})$ can be characterized is when all the weights are equal: 

\begin{example}\label{ex:n=1}
 When $n=1$, we have $\ul{w}=(w)$ and $\ul{C}=(C)$ are singletons, and a necessary and sufficient condition for the feasibility of $\mf{BP}(d,\ul{C};\ul{w})$ is given by the inequality $C\geq d\cdot w$. Suppose more generally that $n\geq 1$ and $w_1=\cdots=w_n=w$. Any solution to $\mf{BP}(d,\ul{C};\ul{w})$ will assign to each $\mc{B}_i$ exactly $d$ balls of weight $w$. This is feasible if and only if $C_i\geq d\cdot w$ for all $i=1,\cdots,n$.
\end{example}

It will be useful from now on to order the weights and capacities, so we will assume that
\[ w_1\geq\cdots\geq w_n\quad\mbox{ and }\quad C_1\geq\cdots\geq C_n.\]
One has that if $\mf{BP}(d,\ul{C};\ul{w})$ is feasible then 
\begin{equation}\label{eq:domin-C-w}
 C_i + C_{i+1} + \cdots + C_n \geq d\cdot(w_i + \cdots + w_n)\mbox{ for each }i=1,\cdots,n.
\end{equation}
This follows since for a feasible partition $\mc{B}_{\bullet}$, the bins $\mc{B}_i,\cdots,\mc{B}_n$ must contain collectively a total of $d\cdot (n-i+1)$ balls, whose total weight can be no smaller than the sum of the smallest $d\cdot(n-i+1)$ elements of the multi-set $\mc{B}$, namely $d\cdot(w_i + \cdots + w_n)$. It is an interesting consequence of our main result below (Theorem~\ref{thm:sufficient-condition}) that the conditions (\ref{eq:domin-C-w}) are also sufficient to guarantee the feasibility of $\mf{BP}(d,\ul{C};\ul{w})$ when $d\gg 0$ and the weights $\ul{w}$ are \defi{balanced}, that is, when $0\leq w_i-w_{i+1}\leq 1$ for all $i=1,\cdots,n-1$. In commutative algebra, balanced weights give rise to ideals that have the remarkable property that their powers have eventually a linear minimal free resolution \cite[Section~5.3]{raicu-sn-invariant}. 

In general, Theorem~\ref{thm:sufficient-condition} provides constant correction factors that transform (\ref{eq:domin-C-w}) into sufficient conditions for feasibility. To see that conditions (\ref{eq:domin-C-w}) cannot be sufficient in general, consider the following.

\begin{example}\label{ex:not-suff-n=2}
 Let $n=2$, $\ul{w}=(3,1)$, and for $d=2$ consider $\ul{C} = (5,3)$, so that (\ref{eq:domin-C-w}) holds. However, since $w_1+w_2>C_2$, it follows that any solution of $\mf{BP}(d,\ul{C};\ul{w})$ can only place balls of weight $w_2=1$ into $\mc{B}_2$, so that $\mc{B}_2=\{1,1\}$, which then forces $\mc{B}_1=\{3,3\}$, exceeding the capacity $C_1=5$. We conclude that $\mf{BP}(d,\ul{C};\ul{w})$ is not feasible in this case. In fact, for any positive integer $d$ we can let $\ul{C} = (3d-1,d+1)$ to obtain an infeasible problem $\mf{BP}(d,\ul{C};\ul{w})$. One can also check that for $\ul{C}=(2d,2d)$, the problem $\mf{BP}(d,\ul{C};\ul{w})$ is feasible when $d$ is even, and infeasible when $d$ is odd (see also Example~\ref{ex:suff-n=2}).
\end{example}

One can check that replacing $w_i$ by $w_i-w_n$ and $C_i$ by $C_i-d\cdot w_n$ leads to an equivalent partitioning problem. Since it doesn't play a major role in our arguments we won't make this reduction here, but we note that whenever we bound $d$ in terms of the highest weight $w_1$, one can in fact improve the bound by considering instead the difference $w_1-w_n$ between the highest and lowest weight.

To state our results we need to introduce some notation. A tuple $\ll=(\ll_1,\cdots,\ll_r)$ of non-negative integers with non-increasing entries $\ll_1\geq\ll_2\geq\cdots\geq\ll_r$ is called a \defi{partition} and it is pictured in the form of a \defi{Young diagram} of left-justified boxes, with $\ll_i$ boxes in row $i$. For instance, $\ll=(4,2,1)$ will be pictured as
\begin{equation}\label{eq:yng421}
\Yvcentermath1\yng(4,2,1)
\end{equation}
The only ambiguity in recovering $\ll$ from its diagram is the number of trailing zeros, as (\ref{eq:yng421}) is for instance also the Young diagram of $\mu=(4,2,1,0,0)$. The \defi{conjugate partition} of $\ll$ is denoted $\ll'$, and is obtained by transposing the corresponding Young diagram. For instance, if $\ll=(4,2,1)$ then $\ll'=(3,2,1,1)$. For partitions with repeating parts, we use the abbreviation $(b^a)$ to denote the sequence $(b,b,\cdots,b)$ of length $a$; for instance $(3,3,3,3,1,1)$ will be abbreviated as $(3^4,1^2)$.

\begin{definition}\label{def:b-lam}
 Consider a partition $\ll=(\ll_1,\cdots,\ll_r)$ and write $\ll'=(r^{a_0},h_1^{a_1},h_2^{a_2},\cdots,h_k^{a_k})$, with $a_0\geq 0$ and $a_1,\cdots,a_k>0$, $r>h_1>\cdots>h_k>0$. Note that $a_0=0$ if and only if $\ll_r=0$, and that $k=0$ if and only if $\ll_1=\cdots=\ll_r$. We define $b(\ll)=0$ if $k=0$, and otherwise let
 \[
 \begin{aligned}
 b(\ll) &= (r-h_1)\cdot(a_1-1) + (h_1-h_2)\cdot(a_2-1) + \cdots + (h_{k-1}-h_k)\cdot(a_k-1) + (h_k-1)\cdot(a_k-1) \\
 &= \left(\sum_{t=1}^k (h_{t-1}-h_t)\cdot(a_t-1)\right) + (h_k-1)\cdot (a_k-1).
 \end{aligned}
 \]
 where in the second equality we set $h_0=r$.
\end{definition}

\begin{remark}\label{rem:b-lam}
 Note that $b(\ll)$ will typically change when we add trailing zeroes to $\ll$. Note also that when $\ll_1,\cdots,\ll_r$ are distinct, we have $k=r-1$, $h_i=r-i$ for $i=1,\cdots,r-1$, and $b(\ll)=\ll_1-\ll_r-r+1$. For an alternative calculation of $b(\ll)$, see Lemma~\ref{lem:bw=sum-gaps}.
\end{remark}

\begin{example}\label{ex:b-lam}
 (a) Suppose that $\ll=(3,3,0)$, so that $r=3$. We have that $\ll'=(2^3)$ and $b(\ll)=4$. If we consider instead $\mu=(3,3)$ then $r=2$, $\mu'=(2^3)=(r^3)$ and $b(\mu)=0$.
 
 \noindent (b) Suppose that $\ll=(5,5,3,1,1,0)$, so that $r=6$. We have that $\ll'=(5^1,3^2,2^2)$ and $b(\ll) = 4$. The formula for $b(\ll)$ is not affected in this case by trailing zeroes, and in particular $b(\mu)=4$ when $\mu=(5,5,3,1,1)$.
\end{example}

We will apply Definition~\ref{def:b-lam} to $\ll=\ul{w}^{\geq i}$ a truncation of the vector of weights $\ul{w}=(w_1,\cdots,w_n)$, where
\[ \ul{w}^{\geq i} = (w_i,w_{i+1},\cdots,w_n) \mbox{ for }i=1,\cdots,n.\]
We are now in the position to state the main result.

\begin{theorem}\label{thm:sufficient-condition}
 Fix $n$ and a tuple $\ul{w}=(w_1\geq\cdots\geq w_n)$ of non-negative weights. There exists a positive integer $d_{\ul{w}}$ such that for every $d\geq d_{\ul{w}}$ and every tuple of capacities $\ul{C}=(C_1\geq\cdots\geq C_n)$ satisfying
 \begin{equation}\label{eq:sufficient-cond-C-w}
 C_i + C_{i+1} + \cdots + C_n \geq d\cdot(w_i + \cdots + w_n) + b(\ul{w}^{\geq i})\mbox{ for each }i=1,\cdots,n,
\end{equation}
the partitioning problem $\mf{BP}(d,\ul{C};\ul{w})$ is feasible. Moreover, one can take $d_{\ul{w}} = n^3\cdot w_1\cdot(2\cdot n+w_1)$. 

If the weights $w_i$ are distinct and if we relax any of the inequalities (\ref{eq:sufficient-cond-C-w}) then there exists $\ul{C}$ satisfying the relaxed conditions for which $\mf{BP}(d,\ul{C};\ul{w})$ is infeasible.
\end{theorem}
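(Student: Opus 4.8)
The plan is to prove the two assertions of Theorem~\ref{thm:sufficient-condition} in turn: first that the corrected conditions (\ref{eq:sufficient-cond-C-w}) force feasibility once $d\ge d_{\ul w}$, and then their sharpness for distinct weights.

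For the feasibility direction I would first record an assignment by the nonnegative integer matrix $X=(x_{ij})$ whose entry $x_{ij}$ counts the weight-$w_j$ balls placed in bin $i$. The requirements that each bin receive $d$ balls and that there be $d$ balls of each weight say exactly that every row and column sum of $X$ equals $d$, while the capacity constraints read $\sum_j x_{ij}\,w_j\le C_i$. Since any nonnegative integer matrix with all line sums equal to $d$ is a sum of $d$ permutation matrices (the integral Birkhoff--von Neumann theorem), the existence of such an $X$ is equivalent to the feasibility of $\mf{BP}(d,\ul C;\ul w)$, with no hidden obstruction. Dropping integrality turns this into a transportation problem, and my first step is to show that it is fractionally feasible precisely when (\ref{eq:domin-C-w}) holds: because both $\ul w$ and $\ul C$ are sorted, a Hall/majorization argument identifies the sorted suffixes as the only binding constraints, the minimum weight that can be forced into bins $i,\dots,n$ being $d\cdot(w_i+\cdots+w_n)$.

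The crux is to convert a fractional solution into an integral one at a cost of at most $b(\ul w^{\ge i})$ in each suffix. Here I would give an explicit greedy construction that fills the bins of smallest capacity with the lightest available balls and pushes the surplus upward, rounding as it goes. The key point is that the only loss relative to the fractional optimum is an integrality (parity-type) loss incurred at each gap of the weight spectrum of $\ul w^{\ge i}$ --- already visible in Example~\ref{ex:not-suff-n=2}, where a single parity mismatch costs exactly $1=b((3,1))$. Summing these losses across the distinct values of $\ul w^{\ge i}$ should reproduce the Young-diagram quantity of Definition~\ref{def:b-lam}; this is where I would invoke the gap reformulation of Lemma~\ref{lem:bw=sum-gaps}. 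The slack $+\,b(\ul w^{\ge i})$ built into (\ref{eq:sufficient-cond-C-w}) is then exactly what the construction consumes, so every bin stays within its capacity. The hypothesis $d\ge d_{\ul w}=n^3\cdot w_1\cdot(2n+w_1)$ enters to guarantee that at each of the $n$ stages enough light balls remain in reserve to perform the local corrections without the errors cascading; bounding a single correction by $O(n\,w_1)$ and accounting for the $n$ bins and the weight range $w_1$ produces the stated cubic threshold.

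For the sharpness assertion, for each index $m$ whose inequality is relaxed I would exhibit a tight instance in the spirit of Example~\ref{ex:not-suff-n=2}. The idea is to choose $\ul C$ so that the relaxed $m$-th inequality together with the remaining (un-relaxed) inequalities of (\ref{eq:sufficient-cond-C-w}) squeeze the admissible weight in the suffix $\{m,\dots,n\}$ into a window that, because the weights are distinct, forces a nearly unique fractional distribution which fails to be integral; for a suitable residue of $d$ this leaves a parity/arithmetic obstruction, so that no integer assignment exists, exactly as in the odd-$d$ case of Example~\ref{ex:not-suff-n=2}. Hence $\mf{BP}(d,\ul C;\ul w)$ is infeasible for this $\ul C$, showing that the constants $b(\ul w^{\ge i})$ cannot be lowered. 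The main obstacle throughout is the integral construction of the third paragraph: matching the accumulated integrality loss exactly to $b(\ul w^{\ge i})$ rather than to a cruder bound, and proving that for $d\ge d_{\ul w}$ the local roundings never compound across the $n$ suffixes. Controlling this compounding, and verifying that the gap formula of Lemma~\ref{lem:bw=sum-gaps} is precisely the worst-case loss, is where the real work lies; by contrast the fractional feasibility and the sharpness construction are comparatively routine.
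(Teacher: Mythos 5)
Your setup is sound but it is all scaffolding: the matrix encoding, the Birkhoff--von Neumann remark, and the claim that the suffix conditions (\ref{eq:domin-C-w}) characterize \emph{fractional} feasibility are fine (and essentially standard), but the theorem lives entirely in the step you yourself call the crux, and for that step you give no argument. You assert that a greedy rounding (lightest balls into smallest bins, pushing surplus upward) incurs an integrality loss of at most $g_i(\ul{w})-1$ per gap of the weight spectrum and that these losses sum to $b(\ul{w}^{\geq i})$; the word ``should'' in that sentence is the entire difficulty. Nothing in the proposal explains why any specific rounding scheme loses exactly this much rather than a cruder bound, nor why the losses in different bins and different suffixes do not compound. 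In the paper this step is not a greedy argument at all: one fixes a $1$-feasible partition minimizing $w(\mc{B}_1)$, shows by a multi-ball exchange (Theorem~\ref{thm:many-w1}) that $\mc{B}_1$ may be assumed to contain many balls of weight $w_1$, extracts via Hall's Marriage Theorem a permutation $R_{\bullet}$ with each $n_i(\mc{B}_{R_i})$ large (Theorem~\ref{thm:existence-R}), and then runs the two-phase swap Algorithm~\ref{alg:gaps}, whose analysis is precisely what matches the achievable gaps $g_i(\mc{B}_{\bullet})\leq g_i(\ul{w})-1$ to the constant $b(\ul{w})$ (Corollary~\ref{cor:small-gi} and Lemma~\ref{lem:bw=sum-gaps}). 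Your $O(n\,w_1)$ accounting for $d_{\ul{w}}$ has no content until such a scheme is specified. Note also that the reduction from the full statement to a single-suffix statement is itself not automatic: the paper routes it through $k$-feasibility and Lemma~\ref{lem:feasibility-truncations}, since Theorem~\ref{thm:inductive-feasibility} takes a $1$-feasibility hypothesis, not the fractional conditions; your plan silently assumes the per-suffix corrections can be carried out independently.

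The sharpness half has two concrete gaps as well. First, you never exhibit the instance: the paper's $\ul{C}^{\circ}$ (Section~\ref{sec:optimality}) is explicit, and its infeasibility is proved by a descending induction forcing $\mc{B}_j=\{w_j^d\}$ for $j\geq h_k+2$ --- not by a parity obstruction --- so it works for \emph{every} $d\geq n$, whereas your ``suitable residue of $d$'' mechanism would only yield infeasible instances for some values of $d$, which is weaker than what the theorem asserts. Second, the delicate point is verifying that the exhibited $\ul{C}$ still satisfies the non-relaxed inequalities (\ref{eq:sufficient-cond-C-w}) for $i\neq i_0$: Example~\ref{ex:C0-not-good} shows the natural construction fails this when weights repeat, and this verification --- via $\Delta^{\circ}_j=g_j(\ul{w})-1$ and the identity (\ref{eq:ineq-delta-circ}), together with the reduction to $i_0=1$ through Lemma~\ref{lem:feasibility-truncations} --- is exactly where distinctness of the weights enters the paper's proof. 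Your sketch acknowledges the requirement but provides no construction meeting it, so both halves of the theorem remain unproved.
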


Notice that the integers $b(\ul{w}^{\geq i})$ in (\ref{eq:sufficient-cond-C-w}) depend only on $\ul{w}$ and not on $d$. They provide appropriate correction factors to the necessary (but non-sufficient) conditions (\ref{eq:domin-C-w}) to guarantee feasability. We were not able to verify that the integers $b(\ul{w}^{\geq i})$ are optimal for arbitrary weights $w_i$, but we do not know any example when they are not (see also Theorem~\ref{thm:optimal-bw}, Remark~\ref{rem:when-C-is-C0} and Section~\ref{sec:open}). When $\ul{w}$ is balanced, the truncations are also balanced, and one can check that as a consequence $b(\ul{w}^{\geq i})=0$ for all $i$, so the conditions (\ref{eq:domin-C-w}) and (\ref{eq:sufficient-cond-C-w}) become identical. It follows that the conditions (\ref{eq:sufficient-cond-C-w}) are both necessary and sufficient, and in particular the constants $b(\ul{w}^{\geq i})$ are optimal in this case.

\begin{example}\label{ex:suff-n=2}
 As in Example~\ref{ex:not-suff-n=2}, let $n=2$ and $\ul{w}=(3,1)$. We have $\ul{w}^{\geq 1}=\ul{w}$, $\ul{w}^{\geq 2}=(1)$, and $b(\ul{w}^{\geq 1})=1$, $b(\ul{w}^{\geq 2})=0$, so the conditions in (\ref{eq:sufficient-cond-C-w}) become
 \[ C_1 + C_2 \geq 4d+1,\quad C_2\geq d,\]
 which are satisfied for instance by $\ul{C}=(2d+1,2d)$. When $d$ is even, we can then solve $\mf{BP}(d,\ul{C};\ul{w})$ by distributing the balls evenly to $\mc{B}_1$ and $\mc{B}_2$. When $d$ is odd, we place $(d+1)/2$ balls of weight $w_1$, and $(d-1)/2$ balls of weight $w_2$ in $\mc{B}_1$, and place the remaining $d$ balls in $\mc{B}_2$.
\end{example}

\begin{example}\label{ex:optimality-main-thm}
 Note that the conditions (\ref{eq:sufficient-cond-C-w}) are not necessary for $\mf{BP}(d,\ul{C};\ul{w})$ to be feasible. For instance, if $d=6$, $\ul{w}=(5,5,3,1,1,0)$, and $\ul{C}=(17^5,8)$ as in Example~\ref{ex:feasible-BP}, the resulting problem is feasible, despite the fact that (\ref{eq:sufficient-cond-C-w}) is violated for $i=1$: we have $C_1+\cdots+C_n=93$, $d\cdot(w_1+\cdots+w_n)=90$, and $b(\ul{w}) = 4$. See Example~\ref{ex:optimal-1-feasible} for a closely related, but infeasible problem.
\end{example}

To prove Theorem~\ref{thm:sufficient-condition} we set up an inductive procedure, based on the number of bins. We consider partial relaxations of the feasibility condition: we say that the problem $\mf{BP}(d,\ul{C};\ul{w})$ is \defi{$k$-feasible} if there exists an assignment of balls to bins (with $d$ balls in each bin) such that
\[ w(\mc{B}_i) \leq C_i,\mbox{ for }i=k+1,\cdots,n,\]
in which case $\mc{B}_{\bullet}$ is called \defi{$k$-feasible}. Equivalently, when looking for a solution for $\mf{BP}(d,\ul{C};\ul{w})$ we allow the capacities to be exceeded in bins $\mc{B}_1,\cdots,\mc{B}_k$. Furthermore, if we let $\ul{D} = (\infty,\cdots,\infty,C_{k+1},\cdots,C_n)$, then $\mf{BP}(d,\ul{C};\ul{w})$ is $k$-feasible if and only if $\mf{BP}(d,\ul{D};\ul{w})$ is feasible. It is clear that $(k-1)$-feasible problems are also $k$-feasible. The interesting direction is then to understand the additional conditions which imply that a $k$-feasible problem is also $(k-1)$-feasible. To that end, we prove the following (note that $0$-feasible is the same as feasible).

\begin{theorem}\label{thm:inductive-feasibility}
 Fix $n$ and a tuple $\ul{w}=(w_1\geq\cdots\geq w_n)$ of non-negative weights. There exists a positive integer $d^1_{\ul{w}}$ such that for every $d\geq d^1_{\ul{w}}$ and every tuple of capacities $\ul{C}=(C_1\geq\cdots\geq C_n)$ for which $\mf{BP}(d,\ul{C};\ul{w})$ is $1$-feasible, we have that if in addition
 \begin{equation}\label{eq:size-C-feasibility}
 C_1 + \cdots + C_n \geq d\cdot(w_1 + \cdots + w_n) + b(\ul{w})
\end{equation}
then $\mf{BP}(d,\ul{C};\ul{w})$ is feasible. Moreover, one can take $d^1_{\ul{w}} = n^3\cdot w_1\cdot(2\cdot n+w_1)$.
\end{theorem}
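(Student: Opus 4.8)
The plan is to prove the theorem by a local exchange argument, starting from a given $1$-feasible assignment $\mc{B}_\bullet$ and repairing the single offending bin $\mc{B}_1$. Recall that $1$-feasibility means $w(\mc{B}_i)\le C_i$ for all $i\ge 2$, while $\mc{B}_1$ alone may exceed its capacity. Writing $W=d\cdot(w_1+\cdots+w_n)$ for the fixed total weight and $s_j=C_j-w(\mc{B}_j)\ge 0$ for the slack of bin $j$, a direct computation from $\sum_j w(\mc{B}_j)=W$ shows that if $w(\mc{B}_1)=C_1+E$ with $E>0$, then
\begin{equation}\label{eq:slack-bound-plan}
\sum_{j=2}^n s_j \;=\; \Big(\sum_{i=1}^n C_i\Big)-W+E \;\ge\; b(\ul{w})+E\;\ge\;b(\ul{w})+1,
\end{equation}
where the first inequality is exactly hypothesis (\ref{eq:size-C-feasibility}). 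Thus, as long as $\mc{B}_1$ is over capacity, there are strictly more than $b(\ul{w})$ units of total slack among the remaining bins.

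The elementary move is a \emph{reducing swap}: choose $j\ge 2$ and balls of weights $a\in\mc{B}_1$, $b'\in\mc{B}_j$ with $a>b'$, and exchange them. This keeps every bin at exactly $d$ balls, lowers $w(\mc{B}_1)$ by $a-b'$, and preserves $1$-feasibility provided $a-b'\le s_j$. I would apply reducing swaps repeatedly, each strictly decreasing $w(\mc{B}_1)$, until either $w(\mc{B}_1)\le C_1$ (in which case $\mc{B}_\bullet$ is feasible and we are done) or no valid reducing swap remains. It therefore suffices to establish the following claim: if $\mc{B}_\bullet$ is $1$-feasible, $w(\mc{B}_1)>C_1$, and no reducing swap is available, then $\sum_{j=2}^n s_j\le b(\ul{w})$. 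Together with (\ref{eq:slack-bound-plan}) this is a contradiction, so the process cannot stall while $\mc{B}_1$ is over capacity. The ``no reducing swap'' hypothesis says precisely that for every $j\ge 2$ and every pair $a\in\mc{B}_1$, $b'\in\mc{B}_j$ with $a>b'$ one has $a-b'>s_j$; in particular each $s_j$ is strictly smaller than the smallest weight increment $a-b'$ realizable between $\mc{B}_1$ and $\mc{B}_j$.

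To prove the claim I would analyze the structure forced by the no-swap condition. Since the capacities are ordered, $\mc{B}_1$ is the heaviest bin, so on average it holds heavier balls than the others; the no-swap condition then pins the contents of the bins into a near-sorted ``staircase'', in which the slack $s_j$ of each bin is controlled by the gap between consecutive distinct weight values appearing at the interface between $\mc{B}_1$ and $\mc{B}_j$. Summing these gap bounds over $j$ and matching the resulting total term-by-term against the gap expression for $b(\ul{w})$ provided by Lemma~\ref{lem:bw=sum-gaps} should yield exactly $\sum_{j\ge2}s_j\le b(\ul{w})$. This structural step is the main obstacle: a naive per-bin estimate gives only $s_j\le w_1-1$, whose sum over $j$ badly overshoots $b(\ul{w})$, so one genuinely needs the global staircase to see that the interface gaps cannot all be charged independently.

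Finally, the hypothesis $d\ge d^1_{\ul{w}}=n^3\,w_1\,(2n+w_1)$ enters to guarantee that the staircase analysis is not spoiled by a mere shortage of balls of a given weight: when $d$ is this large, an over-capacity bin $\mc{B}_1$ must contain enough balls of each relevant weight value for the minimal-increment swaps to actually be present, so that ``no reducing swap'' reflects the capacity granularity measured by $b(\ul{w})$ rather than an accidental absence of a suitable ball. The explicit polynomial is what one obtains by bounding, across all $n$ bins and all $w_1+1$ possible weight values, the number and size of the exchanges the argument performs.
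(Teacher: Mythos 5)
Your proposal correctly reproduces the slack identity (it is the paper's Lemma~\ref{lem:small-sum-gi}), but the claim you reduce everything to --- \emph{if $\mc{B}_\bullet$ is $1$-feasible, $w(\mc{B}_1)>C_1$, and no reducing swap between $\mc{B}_1$ and some $\mc{B}_j$ is available, then $\sum_{j\ge 2}s_j\le b(\ul{w})$} --- is false, so the process can stall and no staircase analysis will rescue it. Concretely, take $\ul{w}=(5,3,1)$, so $b(\ul{w})=2$, let $d\geq 2$ be arbitrary (in particular $d\geq d^1_{\ul{w}}$ is allowed), and set $\ul{C}=(5d-1,\,3d+1,\,d+3)$. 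These capacities are ordered and satisfy (\ref{eq:size-C-feasibility}), since $C_1+C_2+C_3=9d+3\ge 9d+2$. The partition $\mc{B}_1=\{5^d\}$, $\mc{B}_2=\{3^d\}$, $\mc{B}_3=\{1^d\}$ is $1$-feasible, exceeds $C_1$ by $E=1$, and has slacks $s_2=1$, $s_3=3$, so $\sum_{j\ge 2}s_j=4>b(\ul{w})$. Yet no reducing swap exists: the only swap with $\mc{B}_2$ has increment $5-3=2>s_2$, and the only swap with $\mc{B}_3$ has increment $5-1=4>s_3$. The problem is nevertheless feasible, e.g.\ $\mc{B}_1=\{5^{d-1},3\}$, $\mc{B}_2=\{5,3^{d-2},1\}$, $\mc{B}_3=\{3,1^{d-1}\}$; but reaching such a solution requires first exchanging a ball of weight $3$ in $\mc{B}_2$ with a ball of weight $1$ in $\mc{B}_3$ --- a swap \emph{not} involving $\mc{B}_1$ --- which transports slack from $\mc{B}_3$ to $\mc{B}_2$ (making $s_2=3$) and only then unlocks the swap of a $5$ from $\mc{B}_1$ with a $3$ from $\mc{B}_2$. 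Your repertoire of moves, which only ever trades with $\mc{B}_1$, cannot perform this consolidation, and your bound on $d$ does not address the issue: the counterexample persists for all $d$.

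This is exactly the difficulty the paper's proof is built to handle, and it is why that proof is global rather than a greedy repair of $\mc{B}_1$. The paper minimizes $w(\mc{B}_1)$ over all $1$-feasible partitions and derives a contradiction from $W^{min}>C_1$ using three ingredients that have no counterpart in your plan: (i) weight-preserving multi-ball exchanges making $n_1(\mc{B}_1)$ large (Theorem~\ref{thm:many-w1}); (ii) Hall's Marriage Theorem to produce a permutation $R_\bullet$ with $R_1=1$ and each $\mc{B}_{R_i}$ containing many balls of weight $w_i$ (Theorem~\ref{thm:existence-R}); and (iii) the shrinking-gaps algorithm (Algorithm~\ref{alg:gaps}), whose essential swaps in lines 5 and 17 occur between bins $\mc{B}_{R_i}$ and $\mc{B}_{R_j}$ with $i,j\ne 1$, redistributing slack until $g_i(\mc{B}_\bullet)\le g_i(\ul{w})-1$ for all $i\ge 2$, at which point Corollary~\ref{cor:small-gi} (the per-bin refinement of your slack identity, via Lemma~\ref{lem:bw=sum-gaps}) gives feasibility and contradicts minimality. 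The large value of $d$ enters to guarantee, via (i) and (ii), that the balls needed for these inter-bin swaps are actually present --- not, as in your sketch, to ensure swaps with $\mc{B}_1$ are available. So beyond the opening identity, the argument needs to be replaced rather than completed.
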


The proof strategy behind Theorem~\ref{thm:inductive-feasibility}, outlined in Section~\ref{sec:strategy-inductive-feasibility}, is to start with a $1$-feasible partition $\mc{B}_{\bullet}$ and then perform a suitable sequence of ball exchanges leading to a feasible partition. This idea is not new, as exchange algorithms are known to give useful heuristics for solving partitioning problems (see for instance \cite[Section~3.3]{babel}), but the details in our specific case are somewhat involved. It is perhaps refreshing to know that Hall's Marriage Theorem provides one of the key steps in our argument (Section~\ref{sec:Hall-Marriage}).

When $w_1=\cdots=w_n=w$ and $n\geq 2$, knowing that $\mf{BP}(d,\ul{C};\ul{w})$ is $1$-feasible forces $C_n\geq d\cdot w$, so $C_i\geq d\cdot w$ for all $i$, which means that $\mf{BP}(d,\ul{C};\ul{w})$ is in fact feasible. Theorem~\ref{thm:inductive-feasibility} is therefore not interesting unless $\ul{w}$ has at least two distinct parts. The next theorem shows that (\ref{eq:size-C-feasibility}) is not superfluous in this general case, and more importantly, it shows that the constant $b(\ul{w})$ is optimal!

\begin{theorem}\label{thm:optimal-bw}
 Suppose that $\ul{w}=(w_1,\cdots,w_n)$ has at least two distinct parts and that $d\geq n$. There exists a sequence $\ul{C}$ with
 \begin{equation}\label{eq:opt-1}
  C_1+\cdots+C_n = d\cdot(w_1+\cdots+w_n) + b(\ul{w}) - 1
 \end{equation}
 such that $\mf{BP}(d,\ul{C};\ul{w})$ is $1$-feasible but not feasible.
\end{theorem}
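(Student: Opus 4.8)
The plan is to exhibit an explicit sequence $\ul C$ and to verify the two required properties directly. First I would normalize by the shift $w_i \mapsto w_i - w_n$, $C_i \mapsto C_i - d\cdot w_n$, which the discussion preceding the theorem records as an equivalence of partitioning problems; it clearly preserves both feasibility and $1$-feasibility. Crucially it also preserves the correction term: writing $\mu = \ul w - w_n$, the conjugates satisfy $\mu'_j = (\ul w)'_{j+w_n}$, so passing from $\ul w$ to $\mu$ merely deletes the leading block of columns of full height $r=n$ (there are exactly $a_0 = w_n$ of them), replacing $a_0$ by $0$ while leaving $r$ and every pair $(h_t,a_t)$ with $t\ge 1$ unchanged. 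Since Definition~\ref{def:b-lam} does not involve $a_0$, we get $b(\ul w) = b(\ul w - w_n)$, and the target identity (\ref{eq:opt-1}) for $\ul w$ transforms into the same identity for the shifted data. Hence I may assume from now on that $w_n = 0$, so that the lightest of the $dn$ balls carry no weight.

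With $w_n = 0$ in force, I would build $\ul C$ so as to create a one-ball deficit against the following elementary family of necessary conditions. For every integer threshold $t\ge 1$, each of the $d\cdot\#\{i : w_i \ge t\}$ balls of weight $\ge t$ occupies one unit of count and at least $t$ units of capacity in its bin, so any feasible partition must satisfy
\begin{equation}\label{eq:threshold-nec}
 \sum_{i=1}^n \min\left(d,\ \left\lfloor \tfrac{C_i}{t}\right\rfloor\right) \ \ge\ d\cdot\#\{i : w_i \ge t\}.
\end{equation}
The capacities I would choose generalize the normalized worked case $\ul w=(3,1)\rightsquigarrow(2,0)$ of Example~\ref{ex:not-suff-n=2} (and cases such as $(3,0)$, $(2,2,0)$): give the bins that are meant to carry the heavy balls capacities that are one unit short and maximally wasteful modulo the relevant weight value, and dump the remaining budget into the lowest bins, which, having only weight-$0$ balls to hold, incur no further constraint. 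Concretely this amounts to taking $C_i$ of the form $d\cdot w_i$ plus a correction that is $-1$ on a prescribed set of top bins and a compensating positive amount below, arranged so that $\ul C$ stays non-increasing and the running totals $C_i+\cdots+C_n$ remain large enough for the light balls to fit. The number of top bins to shorten and the exact remainders are dictated by the gap-reformulation of $b(\ul w)$ in Lemma~\ref{lem:bw=sum-gaps}; this is precisely the bookkeeping that forces $\sum_i C_i$ to land on $d\cdot(w_1+\cdots+w_n)+b(\ul w)-1$.

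Granting the construction, $1$-feasibility is the easy half: I would pack the balls into $\mc B_2,\ldots,\mc B_n$ in sorted order, placing the lightest available balls into the lowest bins, and check that this respects $C_2,\ldots,C_n$ (the hypothesis $d\ge n$ guarantees the corrections leave each of these bins enough room). By design $\mc B_1$ is then left with exactly $d$ of the heaviest balls. Infeasibility I would prove by showing that for the threshold $t$ singled out by the construction the left-hand side of (\ref{eq:threshold-nec}) falls short of the right-hand side by exactly one: the shortened top bins each lose one heavy ball to rounding, and the capacity placed below is, by a single unit, too fragmented to recover it.

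The main obstacle is this last step in full generality, namely matching the total deficit to $b(\ul w)$ when $\ul w$ has several distinct positive values. For a single positive weight (as in the normalized $(2,0)$ or $(3,0)$) one threshold suffices and the count in (\ref{eq:threshold-nec}) is a clean floor computation; but with several levels the heavy balls at different weights compete for the same bins, and I expect one must either combine the conditions (\ref{eq:threshold-nec}) over a nested family of thresholds with suitable multiplicities, or run a downward induction on the bins showing that each is forced onto its sorted block. Arranging this accounting to produce a deficit of exactly one unit of capacity — no more, no less — while simultaneously keeping $\ul C$ non-increasing and $1$-feasible, is where the identity for $b(\ul w)$ must be used with care; this is the crux, and is exactly the point at which the optimality of the constant $b(\ul w)$, as opposed to merely some constant, is established.
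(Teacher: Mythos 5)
There is a genuine gap: your proposal is a plan rather than a proof, and it stops short exactly where the content of the theorem lies. The capacity sequence is never actually constructed — you describe it as ``$d\cdot w_i$ plus a correction that is $-1$ on a prescribed set of top bins and a compensating positive amount below,'' but which bins, what remainders, and how the total lands on $d\cdot(w_1+\cdots+w_n)+b(\ul{w})-1$ while staying non-increasing and $1$-feasible is precisely the bookkeeping you defer, and you yourself flag the infeasibility argument as an unresolved ``crux.'' For comparison, the paper's proof takes $C^{\circ}_j=d\cdot w_1-1$ for $j\leq h_k$, a carefully tuned $C^{\circ}_{h_k+1}$, and $C^{\circ}_j=d\cdot w_j+(g_j(\ul{w})-1)$ below that, verifies (\ref{eq:opt-1}) via Lemma~\ref{lem:bw=sum-gaps}, and exhibits an explicit $1$-feasible packing. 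Note that against capacities of this shape your ``sorted order'' packing is not even $1$-feasible when $w_1$ has multiplicity $h_k\geq 2$: bins $2,\cdots,h_k$ cannot hold $d$ balls of weight $w_1$, which is why the paper mixes in one ball of weight $w^{2nd}$ per top bin.

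More seriously, the infeasibility mechanism you lead with provably cannot work. Your threshold inequalities are valid necessary conditions, but they are too weak to witness the infeasibility required here: for $\ul{w}=(5,5,3,1,1,0)$ (already normalized, $w_n=0$) and $d=6$, the paper's infeasible sequence $\ul{C}^{\circ}=(29,29,21,7,7,0)$ from Example~\ref{ex:optimal-1-feasible} satisfies every threshold inequality — at $t=5$ the left-hand side is $5+5+4+1+1+0=16\geq 12$, at $t=3$ it is $22\geq 18$, at $t=1$ it is $30\geq 30$, and similarly for all other $t$ — yet $\mf{BP}(d,\ul{C}^{\circ};\ul{w})$ is infeasible. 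The obstruction is not a per-threshold capacity count done bin by bin; it is an interaction between bins. The paper captures it by a descending induction: the capacities $C^{\circ}_j=d\cdot w_j+g_j(\ul{w})-1$ force each lower bin of any putative feasible partition to be exactly $\{w_j^d\}$ (a single heavier ball would weigh at least $p(w_j)$ and overflow the bin by one unit), after which a counting argument among the top $h_k+1$ bins yields the contradiction $a_k-1\geq a_k$. You mention such a downward induction only as one of two possible options in your final paragraph and do not carry it out; since that step is the theorem, the proposal does not constitute a proof.
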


Notice the discrepancy between the lower bound $d\geq n$ in Theorem~\ref{thm:optimal-bw}, and the formula that we give for~$d^1_{\ul{w}}$ in Theorem~\ref{thm:inductive-feasibility}. It would be interesting to understand what the optimal value of $d^1_{\ul{w}}$ is that makes Theorem~\ref{thm:inductive-feasibility} true, and in particular whether it can be taken to only depend on $n$ and not on $\ul{w}$.

\bigskip

\noindent{\bf Organization.} The proof of Theorem~\ref{thm:inductive-feasibility} will occupy most of this article. We outline the general strategy in Section~\ref{sec:strategy-inductive-feasibility}, and verify some of the simple steps, while in Sections~\ref{sec:many-w1},~\ref{sec:Hall-Marriage},~\ref{sec:shrink-gaps} we explain the more substantial steps of our argument. In Section~\ref{sec:optimality} we prove Theorem~\ref{thm:optimal-bw} explaining the optimality of the constant $b(\ul{w})$. We prove Theorem~\ref{thm:sufficient-condition} in Section~\ref{sec:feasibility}, and conclude with some open questions in Section~\ref{sec:open}. Throughout, we illustrate our results with concrete examples in order to make the arguments more transparent.


\section{Proof strategy for Theorem~\ref{thm:inductive-feasibility}}\label{sec:strategy-inductive-feasibility}

In this section we fix some terminology to be used throughout the article, we outline the proof of Theorem~\ref{thm:inductive-feasibility}, and we verify the easier steps in the argument. Given a partition $\mc{B}_{\bullet}$ as in (\ref{eq:partition-B}), we consider for $1\leq i,j\leq n$ the number $n_i(\mc{B}_j)$ of balls of weight $w_i$ in~$\mc{B}_j$.

\begin{example}\label{ex:ni-Bj}
 For the partition $\mc{B}_{\bullet}$ from Example~\ref{ex:feasible-BP}, we have the following table recording in row $i$ and column $j$ the number $n_i(\mc{B}_j)$:
\[
\setlength{\extrarowheight}{2pt}
\begin{array}{c|cccccc}
 & 1 & 2  & 3 & 4 & 5 & 6 \\
 \hline 
1 & 1 & 2 & 3  & 3 & 3 & 0 \\ 
2 & 1 & 2 & 3  & 3 & 3 & 0 \\ 
3 & 3 & 2 & 0  & 0 & 0 & 1 \\ 
4 & 2 & 1 & 2  & 1 & 2 & 4 \\ 
5 & 2 & 1 & 2  & 1 & 2 & 4 \\ 
6 & 0 & 1 & 1  & 2 & 1 & 1 \\ 
\end{array}
\]
\end{example}

We define the \defi{gap sequence} $g_{\bullet}(\mc{B}_{\bullet})$ of the partition $\mc{B}_{\bullet}$ via
\begin{equation}\label{eq:def-gi-B}
 g_i(\mc{B}_{\bullet}) = C_i - w(\mc{B}_i)\mbox{ for }i=1,\cdots,n.
\end{equation}
One can then reinterpret the condition that $\mc{B}_{\bullet}$ is $k$-feasible by the inequalities $g_i(\mc{B}_{\bullet}) \geq 0$ for $i>k$, and in particular a feasible $\mc{B}_{\bullet}$ is one for which all gaps are non-negative.

\begin{example}
 If we take $\ul{C} = (17^5,8)$ and $\mc{B}_{\bullet}$ as in Example~\ref{ex:feasible-BP} then we get the gap sequence
 \[
\setlength{\extrarowheight}{2pt}
\begin{array}{c|cccccc}
a & 1 & 2  & 3 & 4 & 5 & 6 \\
 \hline 
g_a(\mc{B}_{\bullet}) & 1 & 0 & 0  & 1 & 0 & 1 \\ 
\end{array}
\]
\end{example}

Notice that condition (\ref{eq:size-C-feasibility}) gives a lower bound for the sum of the gaps of a partition $\mc{B}_{\bullet}$, which leads to the following quick feasibility criterion.

\begin{lemma}\label{lem:small-sum-gi}
 Suppose that (\ref{eq:size-C-feasibility}) holds and that $\mc{B}_{\bullet}$ is $1$-feasible. If
 \begin{equation}\label{eq:small-sum-gi}
 \sum_{i=2}^n g_i(\mc{B}_{\bullet}) \leq b(\ul{w})
 \end{equation}
 then $\mc{B}_{\bullet}$ is feasible.
\end{lemma}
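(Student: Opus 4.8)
The plan is to observe that the sum of all gaps of a partition is completely determined by the capacities, independently of the particular $1$-feasible partition $\mc{B}_{\bullet}$, and then to isolate the single gap $g_1(\mc{B}_{\bullet})$ whose non-negativity is all that separates $1$-feasibility from feasibility.

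First I would record the conservation identity for the total ball-weight. Since the multiset $\mc{B}$ contains exactly $d$ balls of each weight $w_i$, and $\mc{B}_{\bullet}$ partitions $\mc{B}$, the total weight is an invariant:
\[
\sum_{i=1}^n w(\mc{B}_i) = d\cdot(w_1 + \cdots + w_n).
\]
Substituting the definition (\ref{eq:def-gi-B}) of the gaps then yields
\[
\sum_{i=1}^n g_i(\mc{B}_{\bullet}) = \left(\sum_{i=1}^n C_i\right) - d\cdot(w_1 + \cdots + w_n),
\]
and by the hypothesis (\ref{eq:size-C-feasibility}) the right-hand side is at least $b(\ul{w})$.

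Next I would combine this with the smallness assumption (\ref{eq:small-sum-gi}). Writing
\[
g_1(\mc{B}_{\bullet}) = \sum_{i=1}^n g_i(\mc{B}_{\bullet}) - \sum_{i=2}^n g_i(\mc{B}_{\bullet}),
\]
the first sum is bounded below by $b(\ul{w})$ and the second is bounded above by $b(\ul{w})$, so $g_1(\mc{B}_{\bullet})\geq 0$. Since $\mc{B}_{\bullet}$ is already $1$-feasible, the gaps $g_i(\mc{B}_{\bullet})$ for $i\geq 2$ are non-negative by definition, and we have just shown that the remaining gap $g_1(\mc{B}_{\bullet})$ is non-negative as well; hence all gaps are non-negative and $\mc{B}_{\bullet}$ is feasible.

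The argument is essentially a bookkeeping computation, and I do not anticipate any real obstacle. The only point worth flagging is the conservation identity: it relies on the fact that every partition $\mc{B}_{\bullet}$ uses precisely the same multiset of balls, only distributed differently among the bins, so that the total weight is an invariant of the problem rather than of the partition. This is what allows the sum of the gaps to be compared directly against $b(\ul{w})$ via (\ref{eq:size-C-feasibility}).
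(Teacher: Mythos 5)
Your proposal is correct and follows essentially the same argument as the paper: both use the conservation identity $\sum_{i=1}^n w(\mc{B}_i) = d\cdot(w_1+\cdots+w_n)$ together with (\ref{eq:size-C-feasibility}) to bound $g_1(\mc{B}_{\bullet})$ from below by $b(\ul{w}) - \sum_{i=2}^n g_i(\mc{B}_{\bullet}) \geq 0$, and then invoke $1$-feasibility for the remaining gaps. There is no gap and nothing substantively different from the paper's proof.
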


\begin{proof}
 We have that
 \[ g_1(\mc{B}_{\bullet}) = \sum_{i=1}^n \bigl(C_i - w(\mc{B}_i)\bigr) - \sum_{i=2}^n g_i(\mc{B}_{\bullet}) \geq b(\ul{w}) - \sum_{i=2}^n g_i(\mc{B}_{\bullet}) \geq 0,\]
where the first inequality follows from (\ref{eq:size-C-feasibility}) and the fact that $\sum_{i=1}^n w(\mc{B}_i) = d\cdot(w_1+\cdots+w_n)$, while the second inequality is by hypothesis. Since $\mc{B}_{\bullet}$ is $1$-feasible and $g_1(\mc{B}_{\bullet})\geq 0$, we get that $\mc{B}_{\bullet}$ is feasible.
\end{proof}

Based on Lemma~\ref{lem:small-sum-gi}, the key idea behind the proof of Theorem~\ref{thm:inductive-feasibility} is to look for $1$-feasible partitions with small gaps $g_i(\mc{B}_{\bullet})$ for $i\geq 2$. The precise proof strategy is based on the following outline, to be detailed in the subsequent sections.

\begin{itemize}
 \item[{\bf Step 1.}] We consider all the $1$-feasible partitions $\mc{B}_{\bullet}$ for which the weight $w(\mc{B}_1)$ takes the minimal value, and denote this value by $W^{min}$. If $W^{min}\leq C_1$ then any such partition is in fact feasible, so $\mf{BP}(d,\ul{C};\ul{w})$ is feasible. We suppose that $W^{min}>C_1$ and seek a contradiction in the following steps.
 \item[{\bf Step 2.}] If a partition $\mc{B}_{\bullet}$ has $n_1(\mc{B}_1)\neq 0$ and $g_i(\mc{B}_{\bullet})\geq w_1$ for some $i\geq 2$, we show that a single ball swap creates a $1$-feasible partition $\mc{B}'_{\bullet}$ with $w(\mc{B}'_1)<w(\mc{B}_1)$, contradicting the minimality of $w(\mc{B}_1)$.
 \item[{\bf Step 3.}] Among all the partitions $\mc{B}_{\bullet}$ considered in {\bf Step 1}, we consider one for which $n_1(\mc{B}_1)$ is maximal. We show that if $d$ is large, then $n_1(\mc{B}_1)$ is large as well. In particular $n_1(\mc{B}_1)$ is non-zero, so by {\bf Step~2} we may assume that all gaps satisfy $g_i(\mc{B}_{\bullet})<w_1$.
 \item[{\bf Step 4.}] With $\mc{B}_{\bullet}$ as in {\bf Step 3}, we show that if $d$ is large then we can find a permutation $R_{\bullet}$ of $\{1,2,\cdots,n\}$ with $R_1=1$, and with the property that $n_i(\mc{B}_{R_i})$ is large for each $i=1,2,\cdots,n$.
 \item[{\bf Step 5.}] Using Algorithm~\ref{alg:gaps} (the ``Shrinking gaps algorithm"), we show that through a series of ball swaps we can reach a $1$-feasible partition $\mc{B}'_{\bullet}$ whose gaps $g_i(\mc{B}'_{\bullet})$ are small for $i\geq 2$. Based on Lemma~\ref{lem:small-sum-gi} we deduce that $\mc{B}'_{\bullet}$ is feasible, and therefore $w(\mc{B}_1')\leq C_1<W^{min}=w(\mc{B}_1)$, a contradiction.
\end{itemize}

The set of $1$-feasible partitions is non-empty by the hypothesis of Theorem~\ref{thm:inductive-feasibility}, so {\bf Step 1} of the outline requires no further explanations. We record an important consequence of the inequality $W^{min}>C_1$.

\begin{lemma}\label{lem:Bi-not-only-w1}
 Suppose that $\mc{B}_{\bullet}$ is $1$-feasible and $w(\mc{B}_1)>C_1$ (which is automatic if we assume $W^{min}>C_1$). For each $i\geq 2$ we have that $\mc{B}_i$ contains some ball of weight different from $w_1$.
\end{lemma}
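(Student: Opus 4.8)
The plan is to argue by contradiction, so suppose that for some index $i\geq 2$ the bin $\mc{B}_i$ contains \emph{only} balls of weight $w_1$. Since every bin holds exactly $d$ balls, this forces $\mc{B}_i$ to consist of $d$ balls of weight $w_1$, whence $w(\mc{B}_i)=d\cdot w_1$. This is the largest total weight that any bin can carry: as $w_1$ is the maximal weight and each bin contains exactly $d$ balls, we have in particular $w(\mc{B}_1)\leq d\cdot w_1$.

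I would then bring in the two standing hypotheses on $\mc{B}_{\bullet}$. Because $\mc{B}_{\bullet}$ is $1$-feasible, the constraint $w(\mc{B}_i)\leq C_i$ holds for all $i\geq 2$, and combined with the ordering $C_1\geq C_2\geq\cdots\geq C_n$ this gives
\[ d\cdot w_1 = w(\mc{B}_i)\leq C_i\leq C_1.\]
Chaining this with the bound $w(\mc{B}_1)\leq d\cdot w_1$ from the previous paragraph yields $w(\mc{B}_1)\leq C_1$, contradicting the assumption $w(\mc{B}_1)>C_1$. This completes the argument.

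There is essentially no obstacle here: the proof is a short chain of inequalities, and the only facts it relies on are that $w_1$ is the maximal weight (so $w(\mc{B}_1)\leq d\cdot w_1$), that the capacities are non-increasing (so $C_i\leq C_1$ for $i\geq 2$), and that $1$-feasibility bounds the weights of the bins indexed by $i\geq 2$. Finally, the parenthetical claim that $w(\mc{B}_1)>C_1$ is automatic once $W^{min}>C_1$ needs no separate verification, since $w(\mc{B}_1)\geq W^{min}$ holds by the very definition of $W^{min}$ as the minimum of $w(\mc{B}_1)$ over all $1$-feasible partitions.
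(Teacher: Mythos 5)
Your proof is correct and is essentially identical to the paper's: both argue by contradiction that a bin $\mc{B}_i$ consisting of $d$ balls of weight $w_1$ forces the chain $w(\mc{B}_1)>C_1\geq C_i\geq w(\mc{B}_i)=d\cdot w_1$, which contradicts the trivial bound $w(\mc{B}_1)\leq d\cdot w_1$. No issues.
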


\begin{proof}
 If for some $i\geq 2$ we have that $\mc{B}_i$ consists of $d$ balls of weight $w_1$ then
 \[ w(\mc{B}_1) > C_1 \geq C_i \geq w(\mc{B}_i) = d\cdot w_1,\]
 Since $\mc{B}_1$ contains $d$ balls, each of weight $\leq w_1$, we have $w(\mc{B}_1)\leq d\cdot w_1$, contradicting the inequality above.
\end{proof}

We next explain {\bf Step 2} of the outline, which is a direct consequence of the following.

\begin{lemma}\label{lem:swap-B1-w1}
 Suppose that $W^{min}>C_1$, $\mc{B}_{\bullet}$ is $1$-feasible, $n_1(\mc{B}_1)\neq 0$ and $g_i(\mc{B}_{\bullet})\geq w_1$ for some $i\geq 2$. There exists a $1$-feasible partition $\mc{B}'_{\bullet}$ with $w(\mc{B}'_1)<w(\mc{B}_1)$.
\end{lemma}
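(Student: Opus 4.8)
The plan is to produce $\mc{B}'_{\bullet}$ from $\mc{B}_{\bullet}$ by a single ball exchange between bins $\mc{B}_1$ and $\mc{B}_i$. Since $\mc{B}_{\bullet}$ is $1$-feasible we have $w(\mc{B}_1)\geq W^{min}>C_1$, so Lemma~\ref{lem:Bi-not-only-w1} applies and guarantees that $\mc{B}_i$ contains a ball of some weight $w_j\neq w_1$; as $w_1$ is the largest weight, necessarily $w_j<w_1$. On the other hand the hypothesis $n_1(\mc{B}_1)\neq 0$ provides a ball of weight $w_1$ in $\mc{B}_1$. I would form $\mc{B}'_{\bullet}$ by moving this $w_1$-ball from $\mc{B}_1$ into $\mc{B}_i$ and moving the $w_j$-ball from $\mc{B}_i$ into $\mc{B}_1$, leaving all other bins untouched. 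This keeps exactly $d$ balls in each bin, so $\mc{B}'_{\bullet}$ is a genuine partition.

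It then remains to verify the two asserted properties. For the weight of the first bin, $w(\mc{B}'_1)=w(\mc{B}_1)-w_1+w_j=w(\mc{B}_1)-(w_1-w_j)<w(\mc{B}_1)$ because $w_1-w_j>0$, which is the desired strict decrease. For $1$-feasibility I would check the gaps $g_k(\mc{B}'_{\bullet})$ for $k\geq 2$. Every bin other than $\mc{B}_i$ is unchanged, so $g_k(\mc{B}'_{\bullet})=g_k(\mc{B}_{\bullet})\geq 0$ for $k\geq 2$ with $k\neq i$. For the affected bin, $w(\mc{B}'_i)=w(\mc{B}_i)+(w_1-w_j)$, hence
\[ g_i(\mc{B}'_{\bullet}) = g_i(\mc{B}_{\bullet}) - (w_1-w_j) \geq w_1 - (w_1-w_j) = w_j \geq 0,\]
where the inequality uses the hypothesis $g_i(\mc{B}_{\bullet})\geq w_1$ together with $w_j\geq 0$. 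Thus $\mc{B}'_{\bullet}$ is $1$-feasible, completing the argument.

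There is essentially no obstacle here: the whole content is choosing the right pair of balls to swap and observing that the slack $g_i\geq w_1$ is exactly what absorbs the increase $w_1-w_j\leq w_1$ in $w(\mc{B}_i)$ without violating $g_i\geq 0$. The role of the two hypotheses $n_1(\mc{B}_1)\neq 0$ and $g_i(\mc{B}_{\bullet})\geq w_1$ is precisely to make this single exchange available and to make it safe, respectively; the inequality $W^{min}>C_1$ enters only through Lemma~\ref{lem:Bi-not-only-w1}, guaranteeing that $\mc{B}_i$ is not composed entirely of $w_1$-balls, so that a strictly lighter ball is indeed available to move into $\mc{B}_1$.
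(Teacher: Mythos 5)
Your proof is correct and is essentially identical to the paper's: both perform the single swap of a $w_1$-ball from $\mc{B}_1$ with a lighter $w_j$-ball from $\mc{B}_i$ (supplied by Lemma~\ref{lem:Bi-not-only-w1}), verify $g_i(\mc{B}'_{\bullet})=g_i(\mc{B}_{\bullet})-w_1+w_j\geq w_j\geq 0$, and conclude $w(\mc{B}'_1)<w(\mc{B}_1)$. No discrepancies worth noting.
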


\begin{proof}
 By Lemma~\ref{lem:Bi-not-only-w1}, there exists a ball of weight $w_j\neq w_1$ in $\mc{B}_i$, so $w_j<w_1$. Consider the partition $\mc{B}'_{\bullet}$ obtained from $\mc{B}_{\bullet}$ by swapping a ball of weight $w_1$ in $\mc{B}_1$ (which exists since $n_1(\mc{B}_1)\neq 0$) with a ball of weight $w_j$ from $\mc{B}_i$. Note that $g_a(\mc{B}'_{\bullet})=g_a(\mc{B}_{\bullet})\geq 0$ for all $a\neq 1,i$. Moreover, we have
 \[g_i(\mc{B}'_{\bullet}) = g_i(\mc{B}_{\bullet}) - w_1 + w_j \geq w_j \geq 0,\]
 so $\mc{B}'_{\bullet}$ is $1$-feasible. Finally,
 \[ w(\mc{B}'_1) = w(\mc{B}_1) - w_1 + w_j < w(\mc{B}_1),\]
 as desired.
\end{proof}

The rest of the argument requires more work: we check {\bf Step 3} in Section~\ref{sec:many-w1}, we use Hall's Marriage Theorem to deduce {\bf Step 4} in Section~\ref{sec:Hall-Marriage}, and explain the ``Shrinking gaps algorithm" giving {\bf Step 5} in Section~\ref{sec:shrink-gaps}. As explained in the Introduction, Theorem~\ref{thm:inductive-feasibility} is easy when $w_1=\cdots=w_n$, so we will assume when needed (in Section~\ref{sec:shrink-gaps}) that $\ul{w}$ has at least two distinct parts (in particular $n\geq 2$). We caution the reader that we will be quite relaxed with our estimates below, for the sake of clarity and at the cost of finding a (potentially) far from optimal constant $d^1_{\ul{w}}$.

\section{Many balls of weight $w_1$ in $\mc{B}_1$}\label{sec:many-w1}

The goal of this section is to prove the following result making effective the notion of ``large" in {\bf Step~3} of our outline from Section~\ref{sec:strategy-inductive-feasibility}. The proof is based on an exchange procedure that involves several balls, and is illustrated in Example~\ref{ex:many-w1-in-B1} at the end of this section. We write $|\ul{w}|=w_1+\cdots+w_n$.

\begin{theorem}\label{thm:many-w1}
 Suppose that $\mf{BP}(d,\ul{C};\ul{w})$ is $1$-feasible and $W^{min}>C_1$, and fix a positive integer $N>0$. If
\begin{equation}\label{eq:bound-d-manyw1}
 d \geq n\cdot N + n\cdot(n-1)\cdot w_1 \cdot |\ul{w}|
\end{equation}
then there exists a $1$-feasible partition $\mc{B}_{\bullet}$ with $w(\mc{B}_1)=W^{min}$ and $n_1(\mc{B}_1)\geq N$.
\end{theorem}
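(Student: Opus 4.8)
The plan is to combine an extremal choice with a weight-preserving ball exchange. Among all $1$-feasible partitions attaining $w(\mc{B}_1)=W^{min}$, fix one, call it $\mc{B}_{\bullet}$, for which $n_1(\mc{B}_1)$ is as large as possible, and set $m=n_1(\mc{B}_1)$. I would assume toward a contradiction that $m<N$, and aim to produce another minimal partition with strictly more balls of weight $w_1$ in $\mc{B}_1$, violating the maximality of $m$.

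The first step is to extract a large ``weight budget'' in $\mc{B}_1$ purely from the hypotheses. Since $W^{min}>C_1\geq C_j$ for every $j$ and $\mc{B}_{\bullet}$ is $1$-feasible, summing $w(\mc{B}_j)\leq C_j<W^{min}$ over $j\geq 2$ and adding $w(\mc{B}_1)=W^{min}$ gives $d\cdot|\ul{w}|=\sum_j w(\mc{B}_j)<n\cdot W^{min}$, so $W^{min}>d\cdot|\ul{w}|/n\geq N\cdot|\ul{w}|\geq N\cdot w_1$, where I used $d\geq nN$ (which follows from the stated bound) and $|\ul{w}|\geq w_1$. Writing $w(\mc{B}_1)=m\cdot w_1+(\text{total weight of the non-}w_1\text{ balls of }\mc{B}_1)$ and invoking $m<N$, this forces the balls of weight $<w_1$ in $\mc{B}_1$ to carry total weight exceeding $(N-m)\cdot w_1\geq w_1$. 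In particular $\mc{B}_1$ always contains light balls whose weight can be ``traded up'' to produce a ball of weight $w_1$; note that this already rules out the rigid situation in which every non-$w_1$ ball of $\mc{B}_1$ has the minimal weight (after the standard normalization $w_n=0$, that case would force $w(\mc{B}_1)=m\cdot w_1$ and hence $m\geq N$ directly).

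Next I would locate the raw material for the trade. The multiset $\mc{B}$ contains at least $d$ balls of weight $w_1$, of which $\mc{B}_1$ holds only $m<N$; hence $\mc{B}_2,\ldots,\mc{B}_n$ contain together more than $d-N$ of them, and by pigeonhole some $\mc{B}_{i_0}$ with $i_0\geq 2$ holds at least $(d-N)/(n-1)$ balls of weight $w_1$. The lower bound on $d$ is calibrated so that simultaneously $\mc{B}_1$ carries a large reservoir of light balls (of total weight $>w_1$) and $\mc{B}_{i_0}$ carries many balls of weight $w_1$. The heart of the argument is then a several-ball exchange between $\mc{B}_1$ and $\mc{B}_{i_0}$ (and, if needed, a few auxiliary bins) that moves one ball of weight $w_1$ into $\mc{B}_1$ while shipping out of $\mc{B}_1$ a collection of lighter balls of exactly the same cardinality and exactly the same total weight as what enters. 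Such a swap keeps $w(\mc{B}_1)=W^{min}$, keeps every bin at exactly $d$ balls, leaves $w(\mc{B}_{i_0})$ and the weights of the other touched bins unchanged so that $1$-feasibility is preserved, and raises $n_1(\mc{B}_1)$ by at least one, contradicting the maximality of $m$.

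The \emph{main obstacle} is the existence of this balancing exchange. Upgrading a light ball of $\mc{B}_1$ to weight $w_1$ raises $w(\mc{B}_1)$, so it must be compensated by simultaneously downgrading other balls of $\mc{B}_1$ to strictly lighter ones, and the total weights in and out must match exactly; this is a subset-sum/divisibility constraint on the finite set of available weight values, which need not be solvable by a single four-ball swap (e.g.\ when the weight gaps fail to divide $w_1$). This is exactly where abundance is essential: having many copies of a fixed light weight in $\mc{B}_1$ and many balls of weight $w_1$ in $\mc{B}_{i_0}$ lets me realize a weight- and cardinality-neutral exchange by taking enough copies (a swap of size $O(w_1)$ suffices to reach a common multiple of the relevant weight gaps). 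The factor $n\cdot(n-1)\cdot w_1\cdot|\ul{w}|$ in the bound on $d$ is the cushion guaranteeing these supplies remain available even after the $n-1$ bins and the $O(w_1)$-sized swaps are accounted for; verifying that the requisite light balls can always be assembled, and that routing the displaced balls through the participating bins never violates a capacity in $\mc{B}_2,\ldots,\mc{B}_n$, is the step I expect to require the most care.
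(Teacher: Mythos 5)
Your overall skeleton coincides with the paper's: the same extremal choice (among $1$-feasible partitions with $w(\mc{B}_1)=W^{min}$, maximize $n_1(\mc{B}_1)$), the same pigeonhole step producing a bin $\mc{B}_r$, $r\geq 2$, with $n_1(\mc{B}_r)>\frac{d-N}{n-1}$, and the same type of finishing move, namely a cardinality- and weight-neutral multi-ball exchange that raises $n_1(\mc{B}_1)$ and contradicts maximality. Indeed the paper's exchange ({\bf Step 3.3}) sends $(w_1-w_t)$ balls of weight $w_k$ out of $\mc{B}_1$ in return for $(w_1-w_k)$ balls of weight $w_t$ and $(w_k-w_t)$ balls of weight $w_1$, using the identity $(w_1-w_t)\cdot w_k=(w_1-w_k)\cdot w_t+(w_k-w_t)\cdot w_1$; this is exactly the kind of swap you describe, and your remark that a swap of size $O(w_1)$ resolves the divisibility issue is correct \emph{once the right weights are located in the right bins}. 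The problem is that the step you explicitly defer --- the existence of the balancing exchange --- is the actual mathematical content of the theorem, and the justification you sketch for it does not work.

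Concretely: abundance of a fixed light weight $w_k<w_1$ in $\mc{B}_1$ together with abundance of $w_1$ in $\mc{B}_{i_0}$ is \emph{not} enough to produce any weight- and cardinality-neutral exchange increasing $n_1(\mc{B}_1)$. If $\mc{B}_1$ exports $x$ balls of weight $w_k$ and imports $a\geq 1$ balls of weight $w_1$ plus $x-a$ other balls, weight-neutrality forces the other imports to have total weight $x\cdot w_k-a\cdot w_1<(x-a)\cdot w_k$, i.e.\ some imported balls must have weight strictly below $w_k$. If $\mc{B}_{i_0}$ happens to contain only balls of weights $w_1$ and $w_k$, the balance equation $a\cdot w_1+(x-a)\cdot w_k=x\cdot w_k$ forces $a=0$, so no two-bin swap helps; and pulling lighter balls from an auxiliary bin $\mc{B}_j$ is not weight-neutral for $\mc{B}_j$ (its weight increases, threatening $w(\mc{B}_j)\leq C_j$ and hence $1$-feasibility). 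So the proof needs a third weight $w_t<w_k$ present at least $w_1$ times \emph{inside $\mc{B}_r$ itself}, and this is precisely the paper's {\bf Step 3.2}. That step is not a counting/abundance observation: it is proved by contradiction, playing an upper bound for $w(\mc{B}_1)$ (obtained in {\bf Step 3.1} by choosing $k$ \emph{minimal} with $n_k(\mc{B}_1)\geq w_1$) against a lower bound for $w(\mc{B}_r)$ (valid when no such $w_t$ exists), via the chain $w(\mc{B}_r)\leq C_r\leq C_1<W^{min}=w(\mc{B}_1)$ and the hypothesis (\ref{eq:bound-d-manyw1}) on $d$. This is where $1$-feasibility and $W^{min}>C_1$ enter quantitatively; your proposal uses them only to derive $W^{min}>N\cdot w_1$, which is far too weak to force light balls into the $w_1$-rich bin. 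Until this step is supplied, the exchange at the heart of your argument cannot be carried out, so the proof has a genuine gap.
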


\begin{proof}
Among all the $1$-feasible partitions $\mc{B}_{\bullet}$ with $w(\mc{B}_1)=W^{min}$, we choose one for which $n_1(\mc{B}_1)$ is maximal. If $n_1(\mc{B}_1)\geq N$ then we are done. Suppose by contradiction that $n_1(\mc{B}_1)<N$. Note that
\[\sum_{j=1}^n n_1(\mc{B}_j) \geq d,\]
since there are (at least) $d$ balls of weight $w_1$. Combined with $n_1(\mc{B}_1)<N$, this implies 
\begin{equation}\label{eq:bd-n1-Br}
n_1(\mc{B}_r) > \frac{d-N}{n-1}\mbox{ for some }r\geq 2.
\end{equation}
We reach a contradiction with the maximality of $n_1(\mc{B}_1)$ in three steps, as follows.

\noindent{\bf Step 3.1.} We claim that there exists $k>1$ with $w_k<w_1$ and $n_k(\mc{B}_1)\geq w_1$. If this wasn't the case, then each weight $w_j\neq w_1$ would appear at most $(n-1)$ times in $\mc{B}_1$, and since $n_1(\mc{B}_1)<N$, we would get
\[ d = |\mc{B}_1| < N + (n-1) w_1,\]
contradicting (\ref{eq:bound-d-manyw1}). We may assume that $k$ is minimal, so for $j<k$ either $n_j(\mc{B}_1)\leq w_1-1$, or $w_j=w_1$. Since $w_j\leq w_k$ for $j\geq k$, we obtain
\begin{equation}\label{eq:ub-wB1}
w(\mc{B}_1) \leq N\cdot w_1 + (w_1-1)\cdot(w_2+\cdots+w_{k-1}) + (d-N)\cdot w_k \leq N\cdot (w_1-w_k) + d\cdot w_k + w_1\cdot |\ul{w}|.
\end{equation}

\noindent{\bf Step 3.2.} We next claim that there exists $t>k$ with $w_t<w_k$ and $n_t(\mc{B}_r)\geq w_1$, where $k$ is as in {\bf Step 3.1} and $r$ is as in (\ref{eq:bd-n1-Br}). Suppose by contradiction that this isn't the case, and note that $w_t\geq w_k$ for $t<k$, so
\begin{equation}\label{eq:lb-wBr}
\begin{aligned}
w(\mc{B}_r) &\geq n_1(\mc{B}_r)\cdot w_1 + (d-n_1(\mc{B}_r)-(n-k)\cdot(w_1-1))\cdot w_k + (w_1-1)\cdot(w_{k+1}+\cdots+w_n)  \\
&\geq n_1(\mc{B}_r)\cdot (w_1-w_k) + d\cdot w_k - (n-1)\cdot w_1\cdot |\ul{w}|.\\
\end{aligned}
\end{equation}
Since $w(\mc{B}_1)>C_1\geq C_r\geq w(\mc{B}_r)$, it follows from (\ref{eq:ub-wB1}) and (\ref{eq:lb-wBr}) that 
\[n_1(\mc{B}_r)\cdot (w_1-w_k) + d\cdot w_k - (n-1)\cdot w_1\cdot |\ul{w}| < N\cdot (w_1-w_k) + d\cdot w_k + w_1\cdot |\ul{w}|.\]
Rewriting this inequality, and combining it with (\ref{eq:bd-n1-Br}), it follows that
\[\frac{d-N}{n-1} < n_1(\mc{B}_r) < N + \frac{n\cdot w_1\cdot |\ul{w}|}{w_1-w_k} \leq N + n\cdot w_1\cdot |\ul{w}|,\]
which implies that $d<n\cdot N + n\cdot(n-1)\cdot w_1\cdot |\ul{w}|$, contradicting (\ref{eq:bound-d-manyw1}).

\noindent{\bf Step 3.3.} By the previous steps, we know that $\mc{B}_1$ contains at least $w_1$ balls of weight $w_k<w_1$, that $\mc{B}_r$ contains at least $w_1$ balls of weight $w_t<w_k$, and that $\mc{B}_r$ also contains
\[ n_1(\mc{B}_r) > \frac{d-N}{n-1} \geq N + n\cdot w_1 \cdot |\ul{w}| \geq w_1\]
balls of weight $w_1$. We can then move
\begin{itemize}
 \item $(w_1-w_t)$ balls of weight $w_k$ from $\mc{B}_1$ to $\mc{B}_r$.
 \item $(w_1-w_k)$ balls of weight $w_t$ from $\mc{B}_r$ to $\mc{B}_1$.
 \item $(w_k-w_t)$ balls of weight $w_1$ from $\mc{B}_r$ to $\mc{B}_1$.
\end{itemize}
Since $(w_1-w_t) = (w_1-w_k)+(w_k-w_t)$ and 
\[(w_1-w_t)\cdot w_k = (w_1-w_k)\cdot w_t + (w_k-w_t)\cdot w_1,\]
it follows that  the number of balls in $\mc{B}_1,\mc{B}_r$ is unchanged (namely $d$), and that $w(\mc{B}_1)$ and $w(\mc{B}_r)$ are also unchanged, so the resulting partition is still $1$-feasible with minimal $w(\mc{B}_1)$. However, the number of balls of weight $w_1$ in $\mc{B}_1$ has increased by $w_k-w_t>0$, which contradicts the maximality of $n_1(\mc{B}_1)$.
\end{proof} 
 
\begin{example}\label{ex:many-w1-in-B1}
 The condition (\ref{eq:bound-d-manyw1}) is sufficient (but not necessary) to guarantee the existence of a partition $\mc{B}_{\bullet}$ with many balls of weight $w_1$ in $\mc{B}_1$. Likewise the estimates that we use for the number of balls in various bins can often be improved. In the example below we only illustrate the exchange in {\bf Step 3.3} above (without worrying about our estimates or the bound on $d$ being satisfied). We take $\ul{w}=(5,5,3,1,1,0)$, $d=6$, $\ul{C}=(17^5,8)$, $N=2$, and the partition $\mc{B}_{\bullet}$ given by
\[
\setlength{\extrarowheight}{2pt}
\begin{array}{c|c|c|c|c|c|c}
a & 1 & 2  & 3 & 4 & 5 & 6 \\
 \hline 
\mc{B}_a & 3^6 & 5^3 1^2 0^1 & 5^3 1^2 0^1 & 5^3 1^1 0^2 & 5^3 1^1 0^2 & 1^6 \\ 
\end{array}
\]
We take $k=3$, and note that $\mc{B}_1$ contains six balls of weight $w_k=3$. We take $r=2$ and note that $\mc{B}_r$ contains three balls of weight $w_1=5$. We take $t=4$ and note that $\mc{B}_r$ contains two balls of weight $w_t=1$. If we move
\begin{itemize}
 \item $(w_1-w_t)=4$ balls of weight $w_k=3$ from $\mc{B}_1$ to $\mc{B}_2$,
 \item $(w_1-w_k)=2$ balls of weight $w_t=1$ from $\mc{B}_2$ to $\mc{B}_1$,
 \item $(w_k-w_t)=2$ balls of weight $w_1=5$ from $\mc{B}_2$ to $\mc{B}_1$,
\end{itemize}
then we obtain the partition
\[
\setlength{\extrarowheight}{2pt}
\begin{array}{c|c|c|c|c|c|c}
a & 1 & 2  & 3 & 4 & 5 & 6 \\
 \hline 
\mc{B}_a & 5^2 3^2 1^2 & 5^1 3^4 0^1 & 5^3 1^2 0^1 & 5^3 1^1 0^2 & 5^3 1^1 0^2 & 1^6 \\ 
\end{array}
\]
that has $N=2$ balls of weight $w_1$ in $\mc{B}_1$, and has the same weight sequence $w(\mc{B}_{\bullet})$ as the original partition.
\end{example} 
 
\section{Hall Marriage and the permutation $R_{\bullet}$}\label{sec:Hall-Marriage} 

The goal of this section is to explain and make effective {\bf Step~4} of our outline from Section~\ref{sec:strategy-inductive-feasibility}. To that end, we prove the following.

\begin{theorem}\label{thm:existence-R}
 Fix a positive integer $r$, let $N=(n-1)\cdot n\cdot r$ and suppose that $d$ satisfies (\ref{eq:bound-d-manyw1}). Suppose that $\mf{BP}(d,\ul{C};\ul{w})$ is $1$-feasible and $W^{min}>C_1$, and consider a $1$-feasible partition $\mc{B}_{\bullet}$ with $w(\mc{B}_1)=W^{min}$ and $n_1(\mc{B}_1)\geq N$ (whose existence is guaranteed by Theorem~\ref{thm:many-w1}). There exists a permutation $R_{\bullet}$ of $\{1,2,\cdots,n\}$ with $R_1=1$, and with the property that $n_i(\mc{B}_{R_i})\geq r$ for $i=2,\cdots,n$.
\end{theorem}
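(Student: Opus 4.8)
The plan is to recast the existence of $R_{\bullet}$ as a bipartite matching problem and invoke Hall's Marriage Theorem. Since we insist on $R_1=1$, the substance of the claim is to produce \emph{distinct} bins $R_2,\cdots,R_n\in\{2,\cdots,n\}$ with $n_i(\mc{B}_{R_i})\geq r$ for each $i=2,\cdots,n$. Accordingly, I would form the bipartite graph $G$ whose two vertex classes are both $\{2,\cdots,n\}$ (weight-indices versus bins), joining a weight-index $i$ to a bin $j$ exactly when $n_i(\mc{B}_j)\geq r$. A perfect matching of $G$ is precisely an assignment $i\mapsto R_i$ as desired, so it suffices to verify Hall's condition: for every $S\subseteq\{2,\cdots,n\}$ the neighborhood $\Gamma(S)\subseteq\{2,\cdots,n\}$ satisfies $|\Gamma(S)|\geq|S|$.

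To check Hall's condition I would argue by contradiction, assuming $t:=|\Gamma(S)|<s:=|S|$, and count the balls whose weight equals $w_i$ for some $i\in S$. Writing $V(S)=\{w_i:i\in S\}$ for the set of weight-values occurring in $S$, the total number of such balls is $d\cdot\ol{M}$, where $\ol{M}=\sum_{v\in V(S)}|\{\ell:w_\ell=v\}|\geq s$. The structural input is that every bin $\mc{B}_j$ with $j\notin\Gamma(S)\cup\{1\}$ contains fewer than $r$ balls of each value $v\in V(S)$, hence at most $|V(S)|\cdot(r-1)$ balls of values in $V(S)$ in total; there are $n-1-t$ such bins. Bounding the $t$ bins of $\Gamma(S)$ by $d$ balls each and letting $\beta_1$ be the number of balls of values in $V(S)$ inside $\mc{B}_1$, I arrive at the master inequality
\[ d\cdot\ol{M}\ \leq\ \beta_1 + t\cdot d + (n-1-t)\cdot|V(S)|\cdot(r-1). \]

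The hypothesis $n_1(\mc{B}_1)\geq N$ — that $\mc{B}_1$ is clogged with at least $N$ balls of the top weight $w_1$ — enters through $\beta_1$ and $\ol{M}$. If $w_1\notin V(S)$, those $N$ balls are not among the ones counted by $\beta_1$, so $\beta_1\leq d-N$; combined with $\ol{M}\geq s\geq t+1$ the master inequality reduces to $N\leq(n-1)^2(r-1)$. If instead $w_1\in V(S)$, then because index $1\notin S$ the value $w_1$ occurs strictly more often in all of $\{1,\cdots,n\}$ than inside $S$, forcing $\ol{M}\geq s+1\geq t+2$; using only $\beta_1\leq d$ the master inequality now gives $d\leq(n-1)^2(r-1)$. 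Both are contradictions, since $N=(n-1)\cdot n\cdot r>(n-1)^2(r-1)$ and, by (\ref{eq:bound-d-manyw1}), $d\geq n\cdot N>(n-1)^2(r-1)$. Therefore Hall's condition holds and $G$ has a perfect matching, which together with $R_1=1$ completes the construction of $R_{\bullet}$.

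The main obstacle, and the only genuinely delicate point, is the bookkeeping forced by equal weights: since $n_i(\mc{B}_j)$ counts balls \emph{by value}, weight-indices that share a value have identical neighborhoods, and one must carefully separate the two regimes $w_1\in V(S)$ and $w_1\notin V(S)$ above. In each regime the clogging of $\mc{B}_1$ supplied by Theorem~\ref{thm:many-w1} is what tips the count, and the choice $N=(n-1)\cdot n\cdot r$ is exactly calibrated so that both $N$ and $d$ strictly exceed $(n-1)^2(r-1)$.
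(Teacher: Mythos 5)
Your proposal is correct and follows essentially the same route as the paper: the same bipartite graph on weight-indices versus bins $\{2,\cdots,n\}$ with edges when $n_i(\mc{B}_j)\geq r$, Hall's Marriage Theorem, and a ball-counting contradiction split into the two cases $w_1\in V(S)$ and $w_1\notin V(S)$, using the clogging $n_1(\mc{B}_1)\geq N$ in the latter case and the extra $d$ balls contributed by index $1$ in the former. The only differences are cosmetic (you count the exact total $d\cdot\ol{M}$ and bound by $|V(S)|(r-1)$ where the paper uses lower bounds on $M$ and the cruder $k(r-1)$), and your constants land in the same contradictions.
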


\begin{proof}
We define a bipartite graph $G$ with vertex set $\mc{L}\sqcup \mc{R}$, where $\mc{L}=\mc{R}=\{2,\cdots,n\}$, where $i\in \mc{L}$ and $j\in \mc{R}$ are connected by an edge if and only if $n_i(\mc{B}_j)\geq r$. Our goal is to prove that $G$ admits a perfect matching: letting $R_1=1$ and $R_i=j$ when $i$ is matched to $j$ gives then the desired permutation $R_{\bullet}$.

Given any subset $\mc{S}\subset \mc{L}$, we consider the set of neighbors of elements in $\mc{S}$:
\[ \mc{N}(\mc{S}) = \{ j\in \mc{R} : (i,j)\mbox{ is an edge in }G\mbox{ for some }i\in\mc{S}\}.\]
By Hall's Marriage Theorem (see \cite{hall} or \cite[Theorem~10.4]{korte-vygen}) we need to check that for every subset $\mc{S}\subset \mc{L}$, we have $|\mc{N}(\mc{S})| \geq |\mc{S}|$. Suppose this isn't the case, and let $\mc{S}$ be such that
\[ m=|\mc{N}(\mc{S})| < k = |\mc{S}|.\]
We consider the set of weights indexed by $\mc{S}$,
\[ \mc{W}(\mc{S}) = \{ w_s : s\in \mc{S}\},\]
and let $M$ denote the number of balls in $\mc{B}$ whose weight belongs to $\mc{W}(\mc{S})$. We have two cases:

\noindent{\bf Case 1:} $w_1\in\mc{W}(\mc{S})$. Since each $w_i$, $i=1,\cdots,n$ appears $d$ times in $\mc{B}$, and since $1\not\in\mc{S}$ but $w_1\in\mc{W}(\mc{S})$, it follows that 
\[ M \geq k\cdot d + d.\]
If $j\neq 1$ and $j\not\in\mc{N}(\mc{S})$, we have that each $w_s\in\mc{W}(\mc{S})$ appears at most $(r-1)$ times in $\mc{B}_j$. It follows that
\[ M \leq (1+m)\cdot d + (n-m-1)\cdot k\cdot (r-1).\]
Combining the two inequalities above involving $M$, and using the fact that $k\geq 1+m$, we obtain
\[ d \leq (n-m+1)\cdot k\cdot (r-1) < n\cdot (n-1)\cdot r=N,\]
contradicting (\ref{eq:bound-d-manyw1}).

\noindent{\bf Case 2:} $w_1\not\in\mc{W}(\mc{S})$. We know that $n_1(\mc{B}_1)\geq N$, so at most $d-N$ balls in $\mc{B}_1$ can have weight in $\mc{W}(\mc{S})$. It follows that
\[ M \leq m\cdot d + (d-N) + (n-m-1)\cdot k\cdot (r-1).\]
Since $|S|=k$ it follows that $M\geq k\cdot d$, and using again that $k\geq 1+m$ we conclude that
\[ N \leq (n-m+1)\cdot k\cdot (r-1) < n\cdot (n-1)\cdot r=N,\]
which is again a contradiction.
\end{proof}

\begin{example}\label{ex:perm-R}
 Below is an example of a permutation $R_{\bullet}$ such that $\mc{B}_{R_i}$ contains many balls of weight $w_i$ for all $i$, where ``many" means in this case two balls. We take $\ul{w}=(5,5,3,1,1,0)$, $d=6$, $r=2$, as before, and the partition $\mc{B}_{\bullet}$ and permutation $R_{\bullet}$ given by:
\[
\setlength{\extrarowheight}{2pt}
\begin{array}{c|c|c|c|c|c|c}
a & 1 & 2  & 3 & 4 & 5 & 6 \\
 \hline 
\mc{B}_a & 5^2 3^2 1^2 & 5^1 3^4 0^1 & 5^3 1^2 0^1 & 5^3 1^1 0^2 & 5^3 1^1 0^2 & 1^6 \\ 
\hline
R_a & 1 & 4  & 2 & 3 & 6 & 5 \\
\end{array}
\]

\end{example}

\section{Shrinking gaps}\label{sec:shrink-gaps} 

The goal of this section is to formalize {\bf Step~5} of our outline from Section~\ref{sec:strategy-inductive-feasibility}. In Section~\ref{subsec:gap-sequence} we give an alternative interpretation of the constant $b(\ul{w})$, which leads to a set of inequalities that imply (\ref{eq:small-sum-gi}). We then introduce an algorithm in Section~\ref{subsec:min-gaps} that produces after a series of ball swaps a partition $\mc{B}'_{\bullet}$ either satisfying the said inequalities (in which case it is feasible by Lemma~\ref{lem:small-sum-gi}), or satisfying $w(\mc{B}'_1)<w(\mc{B}_1)$.

\subsection{The gap sequence of a tuple $\ul{w}$}\label{subsec:gap-sequence}

Consider $\ul{w}=(w_1,\cdots,w_n)$ as before, and define for $i=1,\cdots,n$:
\begin{itemize}
 \item The \defi{predecessor} $p(w_i)$ of $w_i$ to be $w_j$, where $j<i$ is the unique index such that $w_j>w_{j+1}=w_{j+2}=\cdots=w_i$. If $w_i=w_1$ then we make the convention that $p(w_i)=\infty$.
 \item The \defi{successor} $s(w_i)$ of $w_i$ to be $w_j$, where $j>i$ is the unique index such that $w_i=w_{i+1}=\cdots=w_{j-1}>w_j$. If $w_i=w_n$, then we make the convention that $s(w_i)=-\infty$.
\end{itemize}

We define the \defi{gap sequence} of $\ul{w}$ to be the list $g_{\bullet}(\ul{w}) = (g_2(\ul{w}),\cdots,g_n(\ul{w}))$ defined by letting
\[ g_i(\ul{w}) = \begin{cases}
p(w_i)-w_i & \mbox{ if }w_i\neq w_1,\\
w_1-s(w_1) & \mbox{ if }w_i=w_1.
\end{cases}
\]
Note that the formula above also makes sense for $i=1$, but this case is not relevant for our argument.

\begin{example}\label{ex:gap-w}
 Let $\ul{w}=(5,5,3,1,1,0)$. We have that $s(5)=3$, $p(3)=5$, $p(1)=3$, and $p(0)=1$. We get that the gap sequence of $\ul{w}$ is
 \[ g_{\bullet}(\ul{w}) = \bigl(g_2(\ul{w}),g_3(\ul{w}),g_4(\ul{w}),g_5(\ul{w}),g_6(\ul{w})\bigr) = (2,2,2,2,1).\]
\end{example}

\begin{lemma}\label{lem:bw=sum-gaps}
 Suppose that $\ul{w}$ has at least two distinct entries. We have that
 \[ b(\ul{w}) = \sum_{i=2}^n \bigl(g_i(\ul{w})-1\bigr).\]
\end{lemma}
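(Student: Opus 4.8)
The plan is to reduce both sides of the claimed identity to the same explicit expression in terms of the distinct weight values and their multiplicities, and then simply observe that the two expressions coincide. Write $v_1 > v_2 > \cdots > v_m \geq 0$ for the distinct values appearing among $w_1,\cdots,w_n$, with $v_j$ occurring with multiplicity $\mu_j$, so that $\mu_1 + \cdots + \mu_m = n$. The hypothesis that $\ul{w}$ has at least two distinct entries is exactly the assumption $m \geq 2$, which guarantees that all the predecessors $p(w_i)$ and the successor $s(w_1)$ used in the gap sequence are finite.

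First I would compute the left-hand side directly from the definition of the gap sequence. The indices $i$ split into consecutive blocks according to the value $w_i$. For an index $i$ lying in the $j$-th block with $j \geq 2$ (value $v_j$), the predecessor is $p(w_i) = v_{j-1}$, so $g_i(\ul{w}) = v_{j-1} - v_j$, and there are $\mu_j$ such indices. For an index $i$ in the first block (value $v_1 = w_1$) with $i \geq 2$, one has $g_i(\ul{w}) = w_1 - s(w_1) = v_1 - v_2$, and there are $\mu_1 - 1$ such indices, since $i=1$ is omitted from the sum. Collecting these contributions yields
\[ \sum_{i=2}^n \bigl(g_i(\ul{w}) - 1\bigr) = (\mu_1 - 1)\bigl((v_1 - v_2) - 1\bigr) + \sum_{j=2}^m \mu_j\bigl((v_{j-1} - v_j) - 1\bigr). \]

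Next I would evaluate $b(\ul{w})$ from Definition~\ref{def:b-lam}, which requires reading off the conjugate partition $\ul{w}'$. Its part of size $s$ equals $\#\{i : w_i \geq s\}$, so $\ul{w}'$ takes the constant value $M_j := \mu_1 + \cdots + \mu_j$ on the columns $s \in (v_{j+1}, v_j]$ (with $v_{m+1} := 0$); hence the part $M_j$ occurs exactly $v_j - v_{j+1}$ times. With $r = n = M_m$, the parts of $\ul{w}'$ strictly smaller than $r$ are precisely $M_1 < \cdots < M_{m-1}$, so in the notation of Definition~\ref{def:b-lam} we have $k = m-1$ and, listing them in decreasing order, $h_t = M_{m-t}$ with $a_t = v_{m-t} - v_{m-t+1}$ for $t = 1,\cdots,m-1$ (and $h_0 = r = M_m$). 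Substituting $h_{t-1} - h_t = M_{m-t+1} - M_{m-t} = \mu_{m-t+1}$, together with $h_k - 1 = M_1 - 1 = \mu_1 - 1$ and $a_k = v_1 - v_2$, into the formula of Definition~\ref{def:b-lam}, and then reindexing by $j = m-t$, gives exactly the same expression as above, proving the identity.

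The only genuinely delicate point is the bookkeeping in the third step: one must track the reversal between the increasing partial sums $M_j$ and the decreasing parts $h_t$, and one must check that the case distinction $w_n = 0$ versus $w_n > 0$ (equivalently $a_0 = 0$ versus $a_0 > 0$, as in Remark~\ref{rem:b-lam}) does not affect the answer. It does not, since the formula for $b(\ul{w})$ in Definition~\ref{def:b-lam} involves only the parts $h_1,\cdots,h_k$ strictly smaller than $r$, and these are $M_1,\cdots,M_{m-1}$ in both cases. Everything else is routine algebra, and the resulting identity can be sanity-checked against Example~\ref{ex:gap-w}, where both sides equal $4$.
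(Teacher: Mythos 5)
Your proof is correct and essentially the same as the paper's: both compute $\sum_{i=2}^n \bigl(g_i(\ul{w})-1\bigr)$ block by block over maximal runs of equal weights and match the result against Definition~\ref{def:b-lam}, the only difference being that you parametrize by the distinct values and multiplicities $(v_j,\mu_j)$ while the paper works directly with the conjugate-partition data $(h_t,a_t)$ --- two equivalent dictionaries. Your bookkeeping (the reindexing $j=m-t$ and the observation that the case $a_0=0$ versus $a_0>0$ is immaterial) checks out.
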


\begin{proof}
 Write the conjugate partition to $\ll=\ul{w}$ as in Definition~\ref{def:b-lam}, $\ll'=(n^{a_0},h_1^{a_1},h_2^{a_2},\cdots,h_k^{a_k})$, with $a_0\geq 0$ and $a_1,\cdots,a_k>0$, $n>h_1>\cdots>h_k>0$. The assumption that $\ul{w}$ has at least two distinct entries is equivalent to the condition $k\neq 0$. We compute $g_i(\ul{w})$ for each $i=2,\cdots,n$.
 
 If $i>h_1$ then $w_i=a_0$, $p(w_i)=a_1+a_0$, so that $g_i(\ul{w})=a_1$. It follows that
 \begin{equation}\label{eq:sum-gap-h1+1}
 \sum_{i=h_1+1}^n \bigl(g_i(\ul{w})-1\bigr) = (n-h)\cdot(a_1-1).
 \end{equation}

For $j=1,\cdots,k-1$, if $h_j\geq i>h_{j+1}$ then $w_i=a_0+a_1+\cdots+a_j$, $p(w_i)=a_0+a_1+\cdots+a_{j+1}$, so that $g_i(\ul{w}) = a_{j+1}$ and
\begin{equation}\label{eq:sum-gap-hj-hj+1}
\sum_{i=h_{j+1}+1}^{h_j} \bigl(g_i(\ul{w})-1\bigr) = (h_j-h_{j+1})\cdot(a_{j+1}-1).
\end{equation}

If $h_k\geq i\geq 2$ then $w_i=w_1=a_0+\cdots+a_k$, $s(w_1) = a_0+\cdots+a_{k-1}$, so $g_i(\ul{w}) = a_k$ and
\begin{equation}\label{eq:sum-gap-2-hk}
\sum_{i=2}^{h_k} \bigl(g_i(\ul{w})-1\bigr) = (h_k-1)\cdot(a_k-1).
\end{equation}

Summing together (\ref{eq:sum-gap-h1+1}--\ref{eq:sum-gap-2-hk}) and comparing with Definition~\ref{def:b-lam} we obtain the desired conclusion.
\end{proof}

\begin{corollary}\label{cor:small-gi}
 Suppose that (\ref{eq:size-C-feasibility}) holds and that $\mc{B}_{\bullet}$ is $1$-feasible. If
 \begin{equation}\label{eq:small-gi}
  g_i(\mc{B}_{\bullet}) \leq g_i(\ul{w}) - 1\mbox{ for all }i=2,\cdots,n
 \end{equation}
 then $\mc{B}_{\bullet}$ is feasible.
\end{corollary}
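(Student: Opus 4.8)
The plan is to deduce this immediately from the two preceding results, namely Lemma~\ref{lem:small-sum-gi} and Lemma~\ref{lem:bw=sum-gaps}. The key observation is that the hypothesis (\ref{eq:small-gi}) is a system of $n-1$ pointwise inequalities, one for each $i=2,\dots,n$, whose sum recovers exactly the single aggregate inequality (\ref{eq:small-sum-gi}) required by Lemma~\ref{lem:small-sum-gi}. So the corollary is really just a repackaging of the feasibility criterion into a pointwise form.

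First I would reduce to the case where $\ul{w}$ has at least two distinct entries, which is in any case the standing assumption of Section~\ref{sec:shrink-gaps}. If instead $w_1=\cdots=w_n$, then as noted in the Introduction any $1$-feasible partition is automatically feasible: $1$-feasibility forces $C_i\geq d\cdot w_1$ for $i\geq 2$, hence $C_1\geq C_2\geq d\cdot w_1=w(\mc{B}_1)$ by the ordering of the capacities, so all gaps are non-negative and there is nothing to prove. In the remaining case the hypothesis of Lemma~\ref{lem:bw=sum-gaps} is satisfied, so that lemma applies.

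With that reduction in place, I would sum the inequalities (\ref{eq:small-gi}) over $i=2,\dots,n$ to obtain
\[ \sum_{i=2}^n g_i(\mc{B}_{\bullet}) \leq \sum_{i=2}^n \bigl(g_i(\ul{w})-1\bigr) = b(\ul{w}),\]
where the final equality is precisely Lemma~\ref{lem:bw=sum-gaps}. This is exactly condition (\ref{eq:small-sum-gi}), and the remaining hypotheses of Lemma~\ref{lem:small-sum-gi}---that (\ref{eq:size-C-feasibility}) holds and that $\mc{B}_{\bullet}$ is $1$-feasible---are carried over verbatim from the statement of the corollary. Hence Lemma~\ref{lem:small-sum-gi} yields that $\mc{B}_{\bullet}$ is feasible.

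There is no real obstacle here: the substance of the corollary lies entirely in the combinatorial identity of Lemma~\ref{lem:bw=sum-gaps} and in the gap bookkeeping of Lemma~\ref{lem:small-sum-gi}, both already established. The only point requiring a moment's care is the degenerate equal-weights case, which must be excluded before invoking Lemma~\ref{lem:bw=sum-gaps} but is handled trivially. The purpose of this statement is to provide the pointwise target (\ref{eq:small-gi}), which is the shape that the ``Shrinking gaps algorithm'' of Section~\ref{subsec:min-gaps} will be engineered to achieve.
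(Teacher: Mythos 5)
Your proof is correct and follows exactly the paper's argument: sum the pointwise inequalities (\ref{eq:small-gi}), identify the right-hand side with $b(\ul{w})$ via Lemma~\ref{lem:bw=sum-gaps}, and conclude with Lemma~\ref{lem:small-sum-gi}. Your extra treatment of the equal-weights case is a harmless (and arguably welcome) addition, since the paper handles that degeneracy only by the standing assumption of Section~\ref{sec:shrink-gaps} that $\ul{w}$ has at least two distinct parts.
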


\begin{proof}
 Summing together the inequalities (\ref{eq:small-gi}) for $i=2,\cdots,n$ we obtain using Lemma~\ref{lem:bw=sum-gaps} the inequality (\ref{eq:small-sum-gi}), and conclude using Lemma~\ref{lem:small-sum-gi} that $\mc{B}_{\bullet}$ is feasible.
\end{proof}

\subsection{Shrinking the gaps through ball swaps}\label{subsec:min-gaps}

We let $r=2\cdot n\cdot w_1$, $N=(n-1)\cdot n\cdot r$ and suppose that $d$ satisfies (\ref{eq:bound-d-manyw1}), that is
\begin{equation}\label{eq:bounds-r-d}
d \geq 2\cdot n^3\cdot(n-1)\cdot w_1 + n\cdot(n-1)\cdot w_1 \cdot |\ul{w}|.
\end{equation}
We assume that $\mc{B}_{\bullet}$ is $1$-feasible, and $w(\mc{B}_1)=W^{min}>C_1$ (so that $g_1(\mc{B}_{\bullet})<0$ and $\mc{B}_{\bullet}$ is not feasible). Using Theorem~\ref{thm:existence-R} (and the fact that $N\geq r$), we can find a permutation $R$ of the set $\{1,\cdots,n\}$, with
\[R_1=1,\mbox{ and }n_i(\mc{B}_{R_i})\geq r \mbox{ for }i=1,\cdots,n.\]
We will also assume that $\ul{w}$ has at least two distinct entries, and let $w^{2nd} = s(w_1)$ denote the second largest weight in the sequence $\ul{w}$. We construct a series of exchanges that will produce out of the $1$-feasible partition $\mc{B}_{\bullet}$ a new one $\mc{B}_{\bullet}'$ with $w(\mc{B}'_1) < w(\mc{B}_1)$, contradicting the minimality of $w(\mc{B}_1)$. Using Lemma~\ref{lem:swap-B1-w1}, we may further assume that $g_i(\mc{B}_{\bullet})<w_1$ for all $i=2,\cdots,n$. We show the following.

\begin{theorem}\label{thm:shrinking-gaps}
 If $\ul{w}$, $\mc{B}_{\bullet}$, $R_{\bullet}$ are as above, then the output of Algorithm~\ref{alg:gaps} below is a $1$-feasible partition $\mc{B}_{\bullet}'$ with $w(\mc{B}'_1) < w(\mc{B}_1)$.
\end{theorem}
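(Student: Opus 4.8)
The plan is to run Algorithm~\ref{alg:gaps} while maintaining two invariants: the partition stays $1$-feasible (so $g_i(\mc{B}_\bullet)\geq 0$ for all $i\geq 2$), and $w(\mc{B}_1)$ never increases. Granting this, the conclusion splits into two exhaustive cases at termination. If the algorithm reaches a partition with $g_i(\mc{B}_\bullet)\leq g_i(\ul{w})-1$ for every $i\geq 2$, then Corollary~\ref{cor:small-gi} makes it feasible, so $g_1(\mc{B}'_\bullet)\geq 0$, i.e.\ $w(\mc{B}'_1)\leq C_1<W^{min}=w(\mc{B}_1)$. Otherwise, the algorithm will at some step perform an exchange that strictly lowers $w(\mc{B}_1)$, and we stop there with $w(\mc{B}'_1)<w(\mc{B}_1)$. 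In both cases the output is $1$-feasible with $w(\mc{B}'_1)<w(\mc{B}_1)$, which is the assertion. Note that, since we have reduced to $g_i(\mc{B}_\bullet)<w_1$ for all $i\geq 2$ via Lemma~\ref{lem:swap-B1-w1}, no single swap removing a weight-$w_1$ ball from $\mc{B}_1$ can itself stay $1$-feasible; the work of the algorithm is to route the excess weight of $\mc{B}_1$ out in small increments.

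First I would isolate the elementary move and check that it preserves $1$-feasibility. To shrink a gap $g_i(\mc{B}_\bullet)$ that still exceeds its target, I would replace a ball of some weight $v$ in that bin by a ball of a slightly larger weight $v'$ drawn from a reservoir, raising the bin's weight by $v'-v$ and lowering its gap by the same amount; choosing $v'-v\leq g_i(\mc{B}_\bullet)$ keeps the gap non-negative. The permutation $R_\bullet$ supplied by Theorem~\ref{thm:existence-R} is exactly what guarantees the balls needed for these moves: each reservoir bin $\mc{B}_{R_i}$ holds at least $r=2\cdot n\cdot w_1$ balls of weight $w_i$, and since each of the $n-1$ gaps is $<w_1$, the total weight that ever needs to be funnelled through the reservoirs is at most $(n-1)\cdot w_1$, so no reservoir is depleted. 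This accounting is where the lower bound (\ref{eq:bounds-r-d}) on $d$ is consumed, and it also bounds the number of swaps, giving termination.

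The step I expect to be the principal obstacle is organizing these elementary upgrades into a chain that actually pulls a unit of weight out of $\mc{B}_1$ without any intermediate bin's gap going negative. Because every gap is now smaller than $w_1$, the weight of a removed $w_1$ ball cannot be absorbed by a single bin; it must be relayed along a chain $\mc{B}_1\to\mc{B}_{j_1}\to\cdots\to\mc{B}_{j_\ell}$, each bin keeping only a digestible portion---a difference of consecutive distinct weights, bounded by its current gap---and forwarding the rest, in the spirit of the three-way rotation used in {\bf Step~3.3} of the proof of Theorem~\ref{thm:many-w1}. The combinatorial heart is to show that the available increments (the consecutive-weight differences, whose sizes are the numbers $g_i(\ul{w})$) are fine enough to tile the strictly positive excess $\sum_{i\geq 2} g_i(\mc{B}_\bullet)-b(\ul{w})$---positive precisely because $g_1(\mc{B}_\bullet)<0$ forces $\sum_{i\geq 2}g_i(\mc{B}_\bullet)>b(\ul{w})$ by (\ref{eq:size-C-feasibility})---and that the reservoir guaranteed at each rung supplies the intermediate-weight ball required. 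Once such a chain is shown to exist, it lowers $w(\mc{B}_1)$ while preserving $1$-feasibility, and the argument of the first paragraph closes the proof.
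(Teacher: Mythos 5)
Your proposal reproduces the paper's top-level skeleton correctly: the dichotomy at termination (either some swap strictly lowers $w(\mc{B}_1)$, or the algorithm runs to completion with $g_{R_i}(\mc{B}_\bullet)<g_i(\ul{w})$ for all $i\geq 2$, in which case Corollary~\ref{cor:small-gi} gives feasibility and hence $w(\mc{B}'_1)\leq C_1<W^{min}=w(\mc{B}_1)$) is exactly how the paper concludes. But everything that makes the theorem nontrivial is deferred. The statement is about the specific Algorithm~\ref{alg:gaps}, so a proof must verify, line by line, that each prescribed swap can actually be performed (the required balls are present in the required bins), that each swap preserves $1$-feasibility, and that the loops terminate in one of the two good exits. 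The paper does this in Lemmas~\ref{lem:swaps-5}--\ref{lem:swap-18}: the swap in line~5 happens fewer than $w_1$ times per index and fewer than $n\cdot w_1$ times overall, which is exactly why the reservoir size $r=2\cdot n\cdot w_1$ suffices (Corollary~\ref{cor:r-large-line-5}); Remark~\ref{rem:lines-1-9} records the state after the first phase; and a separate chain of lemmas controls lines~13--17. Your third paragraph explicitly labels this bookkeeping ``the principal obstacle'' and stops there, so what you have is a plan, not a proof.

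The most serious omission is the case your relay-chain picture never confronts: in the second phase, a bin $\mc{B}_{R_i}$ with $w_i=w_1$ may have a large gap yet contain no ball of weight $w^{2nd}$ to trade against a $w_1$ ball from $\mc{B}_1$. Algorithm~\ref{alg:gaps} handles this in lines~14--17 by a multi-ball exchange ($t+1$ balls of weight $w^{2nd}$ against $t$ balls of weight $w_1$ and one ball of weight $w_l<w^{2nd}$, with $t=\lfloor (w_j-w_l)/g_i(\ul{w})\rfloor$), whose legality rests on Lemma~\ref{lem:Bi-not-only-w1} --- this is where the standing hypothesis $W^{min}>C_1$ enters, guaranteeing the ball of weight $w_l$ exists --- together with the facts that this exchange occurs at most once per index and moves at most $w_1$ balls, so the reservoirs survive (Lemmas~\ref{lem:line13}--\ref{lem:est-t-lines-16-17}). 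Nothing in your proposal produces this move or proves a substitute for it. Relatedly, your parenthetical claim that ``no single swap removing a weight-$w_1$ ball from $\mc{B}_1$ can itself stay $1$-feasible'' is false: line~19 of the algorithm is precisely such a single swap, legal whenever $g_{R_i}(\mc{B}_\bullet)\geq g_i(\ul{w})=w_1-w^{2nd}$ and $\mc{B}_{R_i}$ holds a $w^{2nd}$ ball; the whole purpose of the two phases is to steer the partition into that configuration (or else finish with all gaps small). This misreading is what pushes you toward the unproved ``chain'' mechanism, which is not what the algorithm does and whose existence you never establish.
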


\noindent Note that this theorem leads to the contradiction in {\bf Step 5} of the outline from Section~\ref{sec:strategy-inductive-feasibility}. Note also that
\[ 2\cdot n^3\cdot(n-1)\cdot w_1 + n\cdot(n-1)\cdot w_1 \cdot |\ul{w}| \leq 2\cdot n^4\cdot w_1 + n^2\cdot w_1\cdot n\cdot w_1 = n^3\cdot w_1\cdot(2\cdot n + w_1),\]
which gives the effective bound for $d^1_{\ul{w}}$ in Theorem~\ref{thm:inductive-feasibility}. The proof of Theorem~\ref{thm:shrinking-gaps} will occupy the rest of the section: we break it up into simple steps as follows (the reader may wish to go through Examples~\ref{ex:alg-gaps} and~\ref{ex:lines-14-17} before getting into more details).


\begin{algorithm}
  \KwIn{$\ul{w}=(w_1,\cdots,w_n)$, a non-increasing tuple of weights, at least two distinct\newline 
  $\mc{B}_{\bullet}$, a $1$-feasible partition which is not feasible, with $g_i(\mc{B}_{\bullet})<w_1$ for $i=2,\cdots,n$\newline 
  $R_{\bullet}$, a permutation of $\{1,\cdots,n\}$ such that $R_1=1$ and $n_i(\mc{B}_{R_i})\geq r$ for all $i=1,\cdots,n$}
  \KwOut{$\mc{B}'_{\bullet}$, a $1$-feasible partition with $w(\mc{B}'_1) < w(\mc{B}_1)$}
  $i \leftarrow n$\;
  \While{$w_i<w^{2nd}$}{
  	\While{$g_{R_i}(\mc{B}_{\bullet}) \geq g_i(\ul{w})$}{
  	choose $j<i$ such that $w_j=p(w_i)$\;
	swap a ball of weight $w_i$ in $\mc{B}_{R_i}$ with a ball of weight $w_j$ in $\mc{B}_{R_j}$\;
	\If{$g_{R_j}(\mc{B}_{\bullet})\geq w_1$}{
		swap a ball of weight $w_1$ in $\mc{B}_1$ with a ball of weight $w_i$ in $\mc{B}_{R_j}$\;
		go to line 22
		}
	}
	$i \leftarrow i-1$\;
  }
  $i \leftarrow 2$\;
 \While{$w_i \geq w^{2nd}$}{
 	\While{$g_{R_i}(\mc{B}_{\bullet})\geq g_i(\ul{w})$ }{
		\eIf{$\mc{B}_{R_i}$ contains no ball of weight $w^{2nd}$}{
		choose $j>i$ such that $w_j=w^{2nd}$\;
		choose $l>j$ such that $w_l<w^{2nd}$ and $\mc{B}_{R_i}$ contains a ball of weight $w_l$\;
		$t\leftarrow\displaystyle\left\lfloor\frac{w_j-w_l}{g_i(\ul{w})}\right\rfloor $ \;
		swap $(t+1)$ balls of weight $w_j$ from $\mc{B}_{R_j}$ with $t$ balls of weight $w_1$ and one ball of weight $w_l$ from $\mc{B}_{R_i}$
		}
 		{swap a ball of weight $w_1$ from $\mc{B}_1$ with one of weight $w^{2nd}$ from $\mc{B}_{R_i}$\;
		go to line 22
		}
	}	
	$i \leftarrow i+1$\;
 }
  \Return{$\mc{B}_{\bullet}$}
  \caption{Shrinking gaps algorithm}
  \label{alg:gaps}
\end{algorithm}


\begin{lemma}\label{lem:swaps-5}
 For a fixed index $i$, the swap in line $5$ of the algorithm is repeated fewer than $w_1$ times. Moreover, the partition $\mc{B}_{\bullet}$ remains $1$-feasible after each swap.
\end{lemma}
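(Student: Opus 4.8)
The plan is to track the single gap $g_{R_i}(\mc{B}_{\bullet})$ as the inner \texttt{while} loop (lines~$3$--$8$) runs for the fixed index $i$, which lies in the first phase of the algorithm and hence satisfies $w_i<w^{2nd}\leq w_1$; in particular $w_i\neq w_1$, so $g_i(\ul{w})=p(w_i)-w_i\geq 1$. First I would record the effect of the swap in line~$5$: it removes a ball of weight $w_i$ from $\mc{B}_{R_i}$ and inserts one of weight $w_j=p(w_i)>w_i$, so $w(\mc{B}_{R_i})$ grows by $p(w_i)-w_i=g_i(\ul{w})$ while $w(\mc{B}_{R_j})$ drops by the same amount. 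Since $R$ is a permutation and $i\neq j$, the two affected bins are distinct, and the net effect on the gap sequence is that $g_{R_i}(\mc{B}_{\bullet})$ decreases by $g_i(\ul{w})$, that $g_{R_j}(\mc{B}_{\bullet})$ increases by $g_i(\ul{w})$, and that all other gaps are unchanged.

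For the counting claim I would argue that the gaps $g_a(\mc{B}_{\bullet})$ with $a\geq 2$ stay strictly below $w_1$ throughout phase~$1$. This holds at the start by the standing assumption, and it is preserved by the loop: the only gap a line-$5$ swap can raise is $g_{R_j}(\mc{B}_{\bullet})$, and the moment it reaches $w_1$ the test in line~$6$ fires, line~$7$ executes, and the algorithm exits to line~$22$; so execution never continues past a gap that has hit $w_1$. Granting this invariant, when the outer loop reaches the fixed $i$ we have $0\leq g_{R_i}(\mc{B}_{\bullet})<w_1$ (the lower bound because $i\geq 2$ in phase~$1$ gives $R_i\neq 1$, and $\mc{B}_{\bullet}$ is $1$-feasible). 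Each repetition of line~$5$ lowers $g_{R_i}(\mc{B}_{\bullet})$ by $g_i(\ul{w})\geq 1$, and the inner loop performs the swap only while $g_{R_i}(\mc{B}_{\bullet})\geq g_i(\ul{w})$; hence $g_{R_i}(\mc{B}_{\bullet})$ strictly decreases from a value below $w_1$ while staying nonnegative, giving at most $w_1-1$ swaps.

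For the $1$-feasibility claim I would check the two gaps touched by each swap. After a line-$5$ swap we have $g_{R_i}(\mc{B}_{\bullet})\geq 0$, exactly because the inner loop swaps only when $g_{R_i}(\mc{B}_{\bullet})\geq g_i(\ul{w})$. The other affected gap $g_{R_j}(\mc{B}_{\bullet})$ only increases, and it was nonnegative beforehand: here I would note that $p(w_i)\leq w^{2nd}<w_1$ forces $w_j<w_1$, so $j\neq 1$ and therefore $R_j\neq 1$, which makes $g_{R_j}(\mc{B}_{\bullet})\geq 0$ part of $1$-feasibility. If line~$6$ triggers, the swap in line~$7$ moves weight $w_1-w_i$ from $\mc{B}_1$ to $\mc{B}_{R_j}$: it lowers $g_{R_j}(\mc{B}_{\bullet})$ by $w_1-w_i$, and since we enter only when $g_{R_j}(\mc{B}_{\bullet})\geq w_1$ this leaves $g_{R_j}(\mc{B}_{\bullet})\geq w_i\geq 0$, while it only raises $g_1(\mc{B}_{\bullet})$, which is immaterial to $1$-feasibility (and in fact produces the $w(\mc{B}'_1)<w(\mc{B}_1)$ that the early exit is designed to deliver).

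These steps are essentially bookkeeping once the right invariant is in place, and I expect the main obstacle to be precisely the justification that every gap $g_a(\mc{B}_{\bullet})$ with $a\geq 2$ remains below $w_1$ until the algorithm halts, since this is what guarantees $g_{R_i}(\mc{B}_{\bullet})<w_1$ at the moment index $i$ is processed and hence controls the count. One should also confirm that each swap is actually available, which follows from $n_i(\mc{B}_{R_i})\geq r$ and $n_j(\mc{B}_{R_j})\geq r$ together with $r=2\cdot n\cdot w_1>w_1$, so that fewer than $w_1$ removals of balls of a fixed weight can never exhaust the relevant bin.
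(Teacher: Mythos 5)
Your proof is correct and follows essentially the same route as the paper's: each line-5 swap raises $w(\mc{B}_{R_i})$ by $w_j-w_i=g_i(\ul{w})\geq 1$, the loop guard $g_{R_i}(\mc{B}_{\bullet})\geq g_i(\ul{w})$ keeps that gap non-negative (hence $1$-feasibility, since the only other affected gap $g_{R_j}(\mc{B}_{\bullet})$ only grows), and the bound $g_{R_i}(\mc{B}_{\bullet})<w_1$ caps the count at $w_1-1$. The only organizational difference is that you inline two points the paper treats separately --- the invariant that all gaps with index $\geq 2$ stay below $w_1$ while the algorithm runs (the paper's Lemma~\ref{lem:run-6-8}) and the availability of balls to swap (the paper's Corollary~\ref{cor:r-large-line-5}) --- which is fine, and if anything slightly more explicit than the paper's own wording.
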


\begin{proof}
 Every swap in line $5$ of the algorithm increases $w(\mc{B}_{R_i})$ and decreases $w(\mc{B}_{R_j})$ by 
 \[1\leq w_j-w_i=g_i(\ul{w}).\]
 Since $g_{R_i}(\mc{B}_{\bullet})<w_1$, this occurs at most $(w_1-1)$ times. Since $g_{R_i}(\mc{B}_{\bullet})\geq g_i(\ul{w})$ is satisfied at every swap in line $5$, and since $g_{R_i}(\mc{B}_{\bullet})$ is decreased by $g_i(\ul{w})$, it follows that $g_{R_i}(\mc{B}_{\bullet})$ stays non-negative and therefore $\mc{B}_{\bullet}$ remains $1$-feasible (the only other gap that changes is $g_{R_j}(\mc{B}_{\bullet})$, but it gets larger and thus stays non-negative).
\end{proof}

\begin{lemma}\label{lem:wj-in-line-5}
 The swap in line $5$ of the algorithm occurs fewer than $n\cdot w_1$ times. 
\end{lemma}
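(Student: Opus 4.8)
The plan is to derive this global count from Lemma~\ref{lem:swaps-5}, which already pins down the number of line~5 swaps performed while a single index~$i$ is being processed, by combining it with a bound on how many distinct indices $i$ ever enter the relevant loop.

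First I would observe that line~5 is reached only inside the first \texttt{while} loop of Algorithm~\ref{alg:gaps} (lines~2--11); the second phase (lines~12--21) carries out entirely different exchanges and never executes line~5. So the total number of line~5 swaps equals the sum, over the indices $i$ visited in the first phase, of the number of line~5 swaps performed for that $i$.

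Next I would argue that each index is visited at most once. In the first phase the control variable $i$ starts at $n$ and is only ever \emph{decremented}, at line~10, and this happens only after the inner \texttt{while} loop (lines~3--9) has exited through the failure of its loop condition $g_{R_i}(\mc{B}_\bullet)\geq g_i(\ul{w})$. The sole alternative way of leaving that inner loop is the branch at lines~6--8, which performs one further swap and then jumps to line~22, terminating the algorithm. Hence $i$ is never reset or increased, so the indices that enter the outer loop body are precisely those with $w_i<w^{2nd}$, of which there are at most $n-1$ (the index $1$, where $w_1>w^{2nd}$, is excluded). For each such $i$, Lemma~\ref{lem:swaps-5} bounds the number of line~5 swaps by $w_1-1$, and summing yields at most $(n-1)\cdot(w_1-1)<n\cdot w_1$ swaps in total, as claimed.

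The argument is essentially bookkeeping, so I do not anticipate a genuine obstacle; the only step that requires a careful reading of the pseudocode is confirming the monotonicity of $i$ — in particular that the ``go to line~22'' exit really abandons the loop for good rather than resuming it, and that no path through lines~3--9 ever increments $i$. Once that is verified, the multiplicative bound follows immediately from Lemma~\ref{lem:swaps-5}.
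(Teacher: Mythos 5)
Your proof is correct and follows essentially the same route as the paper: the paper likewise combines Lemma~\ref{lem:swaps-5} (fewer than $w_1$ swaps per index) with the observation that fewer than $n$ indices satisfy $w_i<w^{2nd}$ (the paper notes at most $n-2$, since the indices attaining $w_1$ and $w^{2nd}$ are both excluded, but your bound of $n-1$ suffices just as well). Your additional verification of the control flow — that $i$ is monotonically decremented and the ``go to line 22'' branch terminates the algorithm — is careful bookkeeping that the paper leaves implicit.
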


\begin{proof}
 Using Lemma~\ref{lem:swaps-5}, the conclusion follows from the fact that there are fewer than $n$ indices $i$ with $w_i<w^{2nd}$, which is clear (in fact, there are at most $n-2$ such indices).
\end{proof}

Since every swap in line $5$ of the algorithm decreases by at most one the number of balls of a given weight in any given bin, it follows from Lemmas~\ref{lem:swaps-5},~\ref{lem:wj-in-line-5} that we have the following.

\begin{corollary}\label{cor:r-large-line-5}
 Since $r\geq n\cdot w_1$, at every run through line $5$ of the algorithm we are guaranteed to have at least one ball of weight $w_i$ in $\mc{B}_{R_i}$, and one of weight $w_j$ in $\mc{B}_{R_j}$, so the swap can be performed.
\end{corollary}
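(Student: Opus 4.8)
The plan is to track the \emph{diagonal} counts $n_a(\mc{B}_{R_a})$, for $a=1,\cdots,n$, as the first \emph{while}-loop (lines~2--13) runs, and to argue that none of them ever drops below~$1$. The first observation I would make is that, during this loop, the only operation that modifies the partition is the swap in line~$5$: the swap in line~$7$ is performed only inside the conditional on line~$6$ and is immediately followed by the jump to line~$22$ (the \textbf{Return}), so it occurs at most once and terminates the loop. Consequently, up to and including any given execution of line~$5$, the partition differs from the input only through earlier line-$5$ swaps.

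Next I would record the effect of one line-$5$ swap, say at indices $i$ and $j$ with $w_j=p(w_i)$: it removes a ball of weight $w_i$ from $\mc{B}_{R_i}$ and a ball of weight $w_j$ from $\mc{B}_{R_j}$, placing a $w_j$-ball into $\mc{B}_{R_i}$ and a $w_i$-ball into $\mc{B}_{R_j}$. Among the diagonal counts, only $n_i(\mc{B}_{R_i})$ and $n_j(\mc{B}_{R_j})$ are affected, each decreasing by exactly one, while every other diagonal count is left unchanged. In particular, over the whole loop each individual diagonal count decreases by at most the total number of line-$5$ swaps.

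To conclude, I would combine this with the input guarantee $n_a(\mc{B}_{R_a})\geq r$ for all $a$ (from the choice of $R_{\bullet}$) and with Lemma~\ref{lem:wj-in-line-5}, which bounds the total number of line-$5$ swaps by $n\cdot w_1$. Since each diagonal count is thereby decreased fewer than $n\cdot w_1$ times, and since $r\geq n\cdot w_1$, we obtain $n_a(\mc{B}_{R_a})\geq r-(n\cdot w_1-1)\geq 1$ at every stage of the loop. In particular, immediately before each line-$5$ swap both $n_i(\mc{B}_{R_i})\geq 1$ and $n_j(\mc{B}_{R_j})\geq 1$, so the balls of weights $w_i$ and $w_j$ that the swap requires are indeed present. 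The argument is essentially bookkeeping; the only subtlety to check carefully is that no other line interferes with these counts during the loop, which is exactly what the early return after line~$7$ ensures.
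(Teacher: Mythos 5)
Your proof is correct and takes essentially the same route as the paper: the paper's one-sentence justification likewise combines Lemma~\ref{lem:wj-in-line-5} (fewer than $n\cdot w_1$ line-5 swaps in total) with the observation that each such swap lowers any fixed count $n_a(\mc{B}_{R_a})$ by at most one, so the input guarantee $n_a(\mc{B}_{R_a})\geq r\geq n\cdot w_1$ keeps both required counts positive at every run through line 5. Your extra bookkeeping --- that line 7 fires at most once and immediately exits, so no other operation interferes during the first loop --- is a detail the paper leaves implicit.
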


\begin{lemma}\label{lem:run-6-8}
 Every run through lines 6--8 of the algorithm either decreases $w(\mc{B}_1)$ or guarantees that the inequalities $g_a(\mc{B}_{\bullet})<w_1$ for $a=2,\cdots,n$ remain valid.
\end{lemma}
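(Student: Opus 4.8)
The plan is to track the effect of the single swap in line~$5$ and then read off the two alternatives directly from the test in line~$6$. Recall that line~$4$ selects $j<i$ with $w_j=p(w_i)$; since $w_i<w^{2nd}<w_1$ we have $w_i\neq w_1$, so by definition $g_i(\ul{w})=p(w_i)-w_i=w_j-w_i>0$. Hence the swap in line~$5$, which deposits a ball of weight $w_i$ from $\mc{B}_{R_i}$ into $\mc{B}_{R_j}$ and sends a ball of weight $w_j$ the other way, increases $w(\mc{B}_{R_i})$ by $g_i(\ul{w})$ and decreases $w(\mc{B}_{R_j})$ by the same amount. In terms of gaps, $g_{R_i}(\mc{B}_{\bullet})$ decreases by $g_i(\ul{w})$, $g_{R_j}(\mc{B}_{\bullet})$ increases by $g_i(\ul{w})$, and every other gap is unchanged. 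I would first record that in this (first) while loop we have $w_i<w^{2nd}<w_1$ and $w_j=p(w_i)<w_1$; since $w_1$ is the largest weight, neither $i$ nor $j$ can equal $1$, and because $R$ is a permutation with $R_1=1$ this gives $R_i,R_j\geq 2$. Consequently the only one of the gaps $g_2(\mc{B}_{\bullet}),\cdots,g_n(\mc{B}_{\bullet})$ that could have grown, and hence the only one that could have reached $w_1$, is $g_{R_j}(\mc{B}_{\bullet})$.

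I would then split according to the test in line~$6$. If $g_{R_j}(\mc{B}_{\bullet})<w_1$, then the gap that just increased is still below $w_1$; as $g_{R_i}(\mc{B}_{\bullet})$ only decreased from a value already below $w_1$, every gap $g_a(\mc{B}_{\bullet})$ with $a\geq 2$ stays $<w_1$, which is the second alternative. If instead $g_{R_j}(\mc{B}_{\bullet})\geq w_1$, then lines~$7$--$8$ fire. The swap in line~$7$ is legitimate: the ball of weight $w_i$ that line~$5$ just placed in $\mc{B}_{R_j}$ is available there, and $\mc{B}_1$ still holds a ball of weight $w_1$ because $n_1(\mc{B}_1)\geq r\geq 1$ and no ball has yet been removed from $\mc{B}_1$ in this loop (every line~$5$ swap touches only $\mc{B}_{R_i}$ and $\mc{B}_{R_j}$, both distinct from $\mc{B}_1$, and line~$7$ is followed immediately by the jump to line~$22$). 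Replacing in $\mc{B}_1$ a ball of weight $w_1$ by one of weight $w_i<w_1$ decreases $w(\mc{B}_1)$ by $w_1-w_i>0$, which is the first alternative, after which the algorithm returns.

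The argument is essentially bookkeeping, so the main thing to get right is the pair of observations that make the dichotomy clean: that $R_j\geq 2$, so that $g_{R_j}(\mc{B}_{\bullet})$ is one of the gaps constrained by the invariant rather than the unconstrained $g_1(\mc{B}_{\bullet})$, and that the $w_i$-ball in $\mc{B}_{R_j}$ together with a $w_1$-ball in $\mc{B}_1$ are genuinely present at the moment line~$7$ is reached. Note that non-negativity of the gaps, i.e.\ that $\mc{B}_{\bullet}$ remains $1$-feasible, is not part of the present statement and is in any case already guaranteed by Lemma~\ref{lem:swaps-5}.
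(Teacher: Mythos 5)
Your proof is correct and follows essentially the same route as the paper's: a case split on the test in line~6, noting that the only gap increased by the line~5 swap is $g_{R_j}(\mc{B}_{\bullet})$, and that when the test fires, line~7 lowers $w(\mc{B}_1)$ by $w_1-w_i>0$. The extra bookkeeping you include (that $R_i,R_j\geq 2$ and that the balls needed for the line~7 swap are actually present) is sound and only makes explicit what the paper leaves implicit.
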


\begin{proof}
 If the inequality in line $6$ is satisfied, then line $7$ produces a partition where $w(\mc{B}_1)$ is decreased by $w_1-w_i\geq w_1-w^{2nd}>0$. If the inequality in line $6$ fails, then $g_{R_j}(\mc{B}_{\bullet})<w_1$, so the condition $g_a(\mc{B}_{\bullet})<w_1$ remains valid for $a\geq 2$, since for $a\neq R_j$ no $g_a(\mc{B}_{\bullet})$ is increased by the swap in line $5$ of the algorithm.
\end{proof}

\begin{remark}\label{rem:lines-1-9}
After running the first $9$ lines of the algorithm, the partition $\mc{B}_{\bullet}$ has the following properties:
\begin{enumerate}
 \item For every $i$ with $w_i<w^{2nd}$ we have that $g_{R_i}(\mc{B}_{\bullet})<g_i(\ul{w})$.
 \item For every $i$ we have that $\mc{B}_{R_i}$ contains more than $r-n\cdot w_1=n\cdot w_1$ balls of weight $w_i$.
\end{enumerate}
Indeed, conclusion (1) is just a reformulation of the failure of the inequality in line 3, while conclusion (2) follows from Lemma~\ref{lem:wj-in-line-5}.
\end{remark}

Our next goal is to show that the second part of the algorithm yields conclusion (1) in Remark~\ref{rem:lines-1-9} also for each $i\geq 2$ for which $w_i\geq w^{2nd}$ (or it results in a partition with a lower $w(\mc{B}_1)$).

\begin{lemma}\label{lem:line13}
 For a fixed $i$, the condition in line 13 is satisfied at most once.
\end{lemma}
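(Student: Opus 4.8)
The plan is to read the claim directly off the control flow of the inner \texttt{while} loop on line 12, together with the effect of the swap on line 17. For a fixed $i$ in the second phase of the algorithm we have $w_i\geq w^{2nd}$, and each pass through the body of the loop on line 12 executes exactly one of the two branches of the \texttt{if}--\texttt{else} on line 13: the ``then'' branch (lines 14--17), which runs precisely when the condition on line 13 is satisfied, i.e.\ when $\mc{B}_{R_i}$ contains no ball of weight $w^{2nd}$; and the ``else'' branch (lines 19--20) otherwise. Since the ``else'' branch ends with ``go to line 22'', executing it terminates the algorithm. Hence the only way line 13 can be revisited for a fixed $i$ is by having just run the ``then'' branch and then looping back through line 12.

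The key observation I would isolate is that a single execution of the ``then'' branch leaves $\mc{B}_{R_i}$ with at least one ball of weight $w^{2nd}$. Indeed, the swap on line 17 moves $(t+1)$ balls of weight $w_j=w^{2nd}$ into $\mc{B}_{R_i}$, and $t\geq 0$ gives $t+1\geq 1$; the only balls it removes from $\mc{B}_{R_i}$ are $t$ balls of weight $w_1$ and one ball of weight $w_l$, and neither of those weights equals $w^{2nd}$, since $w^{2nd}<w_1$ and, by the choice made on line 15, $w_l<w^{2nd}$. Thus during the ``then'' branch the bin $\mc{B}_{R_i}$ genuinely gains balls of weight $w^{2nd}$ and loses none.

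Combining the two points closes the argument: if the condition on line 13 is satisfied for a fixed $i$, then immediately after that iteration $\mc{B}_{R_i}$ contains a ball of weight $w^{2nd}$, so on any later pass through line 12 the condition on line 13 must fail, the ``else'' branch runs, and the algorithm halts. Therefore the condition on line 13 can hold at most once for each fixed $i$. I expect no genuine obstacle here; the single point demanding care is confirming that the line-17 swap does not itself delete a $w^{2nd}$-ball from $\mc{B}_{R_i}$, which is exactly the chain of weight inequalities $w_l<w^{2nd}<w_1$ recorded above.
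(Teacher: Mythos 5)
Your proof is correct and takes essentially the same approach as the paper: the core point in both is that the swap in line 17 deposits $t+1\geq 1$ balls of weight $w_j=w^{2nd}$ into $\mc{B}_{R_i}$, so the condition in line 13 cannot hold a second time for the same $i$. Your extra verification that this swap removes no $w^{2nd}$-balls from $\mc{B}_{R_i}$ (via $w_l<w^{2nd}<w_1$) is a detail the paper leaves implicit.
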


\begin{proof}
 The swap in line 17 places $t+1\geq 1$ balls of weight $w_j=w^{2nd}$ into $\mc{B}_{R_i}$, so the condition in line 13 can't be satisfied a second time for the same value of $i$. 
\end{proof}

\begin{lemma}\label{lem:swap17}
 The swap in line 17 occurs fewer than $n$ times.
\end{lemma}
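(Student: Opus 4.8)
The plan is to deduce this from Lemma~\ref{lem:line13} together with a count of the indices $i$ processed by the second loop (lines 11--21) of Algorithm~\ref{alg:gaps}. Line 17 lies in the branch of the eIf beginning at line 13 that is taken precisely when $\mc{B}_{R_i}$ contains no ball of weight $w^{2nd}$, that is, exactly when the condition in line 13 is satisfied. By Lemma~\ref{lem:line13} this condition holds at most once for each fixed value of $i$, so line 17 is executed at most once per index $i$. It therefore suffices to bound the number of distinct values of $i$ for which line 17 can be reached at all.

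First I would observe that line 17 is executed only while the current index satisfies $w_i \geq w^{2nd}$, since this is the guard of the outer while loop in line 11 that governs the second phase of the algorithm. Because the weights are non-increasing, the set of indices $i$ with $w_i \geq w^{2nd} = s(w_1)$ is a prefix $\{1,2,\cdots,m\}$ of $\{1,\cdots,n\}$, consisting precisely of the positions where $w_i$ equals $w_1$ or $w^{2nd}$. The loop in line 11 initializes $i \leftarrow 2$ and thereafter only increments $i$, so the values it ever takes lie in $\{2,\cdots,m\}$, a set of size $m-1 \leq n-1$.

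Combining the two observations, line 17 is executed at most once for each of the at most $n-1$ admissible values of $i$, hence fewer than $n$ times in total, as claimed. I expect no real obstacle here: the substantive content is already carried by Lemma~\ref{lem:line13}, which guarantees that the insertion of $(t+1)$ balls of weight $w^{2nd}$ into $\mc{B}_{R_i}$ prevents the line 13 condition from recurring for the same $i$, and the present statement is then a direct bookkeeping consequence. The only point requiring minor care is to confirm that the outer loop contributes at most $n-1$ distinct indices, which follows from the fact that it begins at $i=2$ rather than $i=1$.
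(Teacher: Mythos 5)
Your proof is correct and follows essentially the same route as the paper's: combine Lemma~\ref{lem:line13} (the line 13 condition holds at most once per index $i$) with the observation that the loop in lines 11--21 visits fewer than $n$ indices, namely those $i\geq 2$ with $w_i\geq w^{2nd}$. The extra bookkeeping you provide about the admissible indices forming a prefix is fine but not needed beyond what the paper already states.
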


\begin{proof}
 Since there are fewer than $n$ values of $i\geq 2$ for which $w_i\geq w^{2nd}$, and since for each such value the condition in line 13 is satisfied at most once, the conclusion follows.
\end{proof}

\begin{lemma}\label{lem:t-line16}
 The value of $t$ in line 16 is smaller than $w_1$.
\end{lemma}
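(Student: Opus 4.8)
The plan is to bound $t$ directly from the formula $t = \left\lfloor (w_j - w_l)/g_i(\ul{w}) \right\rfloor$ in line 16. The first observation is that the denominator $g_i(\ul{w})$ is a positive integer. Indeed, at this point of the algorithm we are inside the second \textbf{while} loop, so $w_i \geq w^{2nd}$ and hence $w_i \in \{w_1, w^{2nd}\}$; in either case the definition of the gap sequence gives $g_i(\ul{w}) = w_1 - w^{2nd}$, which is at least $1$ because $w^{2nd} = s(w_1) < w_1$ and the weights are integers. Since $g_i(\ul{w}) \geq 1$, the floor cannot exceed its argument, so $t \leq w_j - w_l$.

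I would then read off the sizes of $w_j$ and $w_l$ from lines 14--15: there $j$ is chosen with $w_j = w^{2nd}$, and $l$ with $w_l < w^{2nd}$, while $w_l \geq 0$ since all weights are non-negative. Thus $0 < w_j - w_l = w^{2nd} - w_l \leq w^{2nd}$, and combining this with $t \leq w_j - w_l$ and $w^{2nd} < w_1$ yields $t \leq w_j - w_l \leq w^{2nd} < w_1$, which is the assertion.

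I do not expect any genuine obstacle: the entire content is the chain $t \leq (w_j - w_l)/g_i(\ul{w}) \leq w_j - w_l \leq w^{2nd} < w_1$, and the only step worth pausing over is verifying $g_i(\ul{w}) \geq 1$, which guarantees that dividing by it does not inflate the floor. Everything else follows immediately from the selection rules for $j$ and $l$ and from $w^{2nd}$ being strictly below the top weight $w_1$.
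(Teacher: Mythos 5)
Your proof is correct and follows essentially the same route as the paper, whose entire argument is the chain $t \leq w_j - w_l < w_1$ using $w_j = w^{2nd} < w_1$. You simply make explicit the two facts the paper leaves implicit — that $g_i(\ul{w}) = w_1 - w^{2nd} \geq 1$ (so the floor of the quotient is at most the numerator) and that $w_l \geq 0$ — which is a fine, slightly more careful writeup of the identical idea.
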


\begin{proof}
 We have that $t\leq w_j-w_l < w_1$, since $w_j=w^{2nd}<w_1$.
\end{proof}

\begin{lemma}\label{lem:lines-13-14-15}
 The condition in line 13 can only be satisfied when $w_i=w_1$. Moreover, when it is satisfied we have that indices $j,l$ as in lines $14$ and $15$ exist.
\end{lemma}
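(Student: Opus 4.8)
The plan is to treat the two assertions in turn, using the post-line-9 accounting from Remark~\ref{rem:lines-1-9}(2) together with the swap counts in Lemmas~\ref{lem:t-line16} and~\ref{lem:swap17}. For the first assertion, note that the second loop only visits indices $i\geq 2$ with $w_i\geq w^{2nd}$, hence with $w_i\in\{w_1,w^{2nd}\}$; so it is enough to show that line 13 is never satisfied when $w_i=w^{2nd}$, i.e.\ that such a bin $\mc{B}_{R_i}$ always still contains a ball of weight $w^{2nd}$ at the moment line 13 is tested.

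To prove this, I would argue that after line 9 the bin $\mc{B}_{R_i}$ contains more than $n\cdot w_1$ balls of weight $w_i=w^{2nd}$, by Remark~\ref{rem:lines-1-9}(2). In the second part of the algorithm the only instruction that can remove a ball of weight $w^{2nd}$ from a bin is the swap in line 17: the swap in line 19 also removes such a ball, but it is immediately followed by the instruction to go to line 22 and return, so if it had already fired we would never reach line 13 again. By Lemma~\ref{lem:swap17} the swap in line 17 is performed fewer than $n$ times, and by Lemma~\ref{lem:t-line16} each performance removes $t+1\leq w_1$ balls of weight $w^{2nd}$, all from a single bin. Hence fewer than $n\cdot w_1$ balls of weight $w^{2nd}$ are ever removed, in total and in particular from $\mc{B}_{R_i}$, while none are added to $\mc{B}_{R_i}$ (line 17 adds weight-$w^{2nd}$ balls only to the bin $\mc{B}_{R_{i'}}$ of the outer index $i'$ being processed, and $w_{i'}=w_1\neq w^{2nd}$ forces $\mc{B}_{R_{i'}}\neq\mc{B}_{R_i}$). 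Therefore $\mc{B}_{R_i}$ retains a ball of weight $w^{2nd}$ and line 13 fails, establishing the first assertion.

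For the ``moreover'' part I assume line 13 holds, so $w_i=w_1$. An index $j>i$ with $w_j=w^{2nd}$ exists because $\ul{w}$ has a second distinct value $w^{2nd}<w_1$, and every position of weight $w^{2nd}$ lies beyond the last position of weight $w_1$, hence beyond $i$. For $l$ I argue by contradiction: inside the inner loop we have $g_{R_i}(\mc{B}_{\bullet})\geq g_i(\ul{w})=w_1-w^{2nd}>0$, so $w(\mc{B}_{R_i})<C_{R_i}\leq C_1<W^{min}\leq d\cdot w_1$. Were $\mc{B}_{R_i}$ to contain no ball of weight below $w^{2nd}$, then---since line 13 excludes balls of weight exactly $w^{2nd}$ and all weights are at most $w_1$---all $d$ of its balls would have weight $w_1$, forcing $w(\mc{B}_{R_i})=d\cdot w_1$, a contradiction. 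Thus $\mc{B}_{R_i}$ contains a ball of some weight $w_l<w^{2nd}$, and any such $l$ satisfies $l>j$ since positions of weight below $w^{2nd}$ follow those of weight $w^{2nd}$.

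The main obstacle is the bookkeeping in the first assertion: one must pin down the exact moment at which line 13 is tested and verify that every mechanism depleting the stock of weight-$w^{2nd}$ balls in $\mc{B}_{R_i}$ has been accounted for. The realization that makes this clean is that line 19 is terminal, so only line 17 can erode the surplus guaranteed by Remark~\ref{rem:lines-1-9}(2); once this is seen, the crude bound $n\cdot w_1$ on the total number of removed balls closes the argument with room to spare.
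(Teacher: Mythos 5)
Your proof is correct and takes essentially the same route as the paper's: the first assertion rests on the same accounting (Remark~\ref{rem:lines-1-9}(2) for the initial surplus, Lemma~\ref{lem:swap17} for the number of line-17 swaps, Lemma~\ref{lem:t-line16} for the size of each) to conclude that a bin $\mc{B}_{R_i}$ with $w_i=w^{2nd}$ still holds a ball of weight $w^{2nd}$, and the existence of $j$ and $l$ is argued by the same monotonicity and ``otherwise $\mc{B}_{R_i}=\{w_1^d\}$'' contradiction. The only cosmetic difference is that where the paper cites Lemma~\ref{lem:Bi-not-only-w1} to rule out $\mc{B}_{R_i}=\{w_1^d\}$, you re-derive its content inline from the loop guard $g_{R_i}(\mc{B}_{\bullet})\geq g_i(\ul{w})>0$ together with $C_{R_i}\leq C_1<W^{min}\leq d\cdot w_1$, which is equally valid.
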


\begin{proof}
 When the loop in line 11 is initiated, we know by Remark~\ref{rem:lines-1-9} that each $\mc{B}_{R_a}$ contains more than $r-n\cdot w_1=n\cdot w_1$ balls of weight $w_a$. The swap in line $17$ occurs fewer than $n$ times by Lemma~\ref{lem:swap17}, and each time it removes at most $w_1\geq t+1$ balls of weight $w_a$ from $\mc{B}_{R_a}$ by Lemma~\ref{lem:t-line16}, so at any point we have that each $\mc{B}_{R_a}$ contains at least $w_1$ balls of weight $w_a$.
 
 To prove the first assertion, note that if $w_i\geq w^{2nd}$ then either $w_i=w_1$ or $w_i=w^{2nd}$. Since $\mc{B}_{R_i}$ contains balls of weight $w_i$, it follows that for $w_i=w^{2nd}$ the condition in line $13$ must fail. 
 
 We now assume that the condition in line $13$ is satisfied, and in particular $w_i=w_1$. We can choose $j>i$ with $w_j=w^{2nd}$ since $\ul{w}$ is non-decreasing. If the index $l$ in line $15$ did not exist, then $\mc{B}_{R_i}$ would have to consist of $d$ balls of weight $w_1$, contradicting the conclusion of Lemma~\ref{lem:Bi-not-only-w1}.
\end{proof}

\begin{lemma}\label{lem:est-t-lines-16-17}
 The swap in line $17$ can always be performed, and the resulting $\mc{B}_{\bullet}$ stays $1$-feasible.
\end{lemma}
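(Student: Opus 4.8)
The plan is to establish the two claims of the lemma in turn: first that the three groups of balls named in line~$17$ are actually present in the indicated bins, and then that the exchange preserves $1$-feasibility. Both rely on the running invariant recorded inside the proof of Lemma~\ref{lem:lines-13-14-15}, that throughout the second while loop each bin $\mc{B}_{R_a}$ contains at least $w_1$ balls of weight $w_a$, together with the facts $w_i=w_1$ (Lemma~\ref{lem:lines-13-14-15}) and $t+1\leq w_1$ (Lemma~\ref{lem:t-line16}) that hold whenever line~$17$ is reached.

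For the availability of the balls I would argue as follows. The $(t+1)$ balls of weight $w_j=w^{2nd}$ can be taken from $\mc{B}_{R_j}$ since the invariant provides at least $w_1\geq t+1$ of them there. The $t$ balls of weight $w_1$ can be taken from $\mc{B}_{R_i}$ because $w_i=w_1$, so the invariant supplies at least $w_1>t$ balls of weight $w_1$ in $\mc{B}_{R_i}$. The single ball of weight $w_l$ is available in $\mc{B}_{R_i}$ by the choice of $l$ in line~$15$. Note that the swap removes $t+1$ balls from each of $\mc{B}_{R_i}$ and $\mc{B}_{R_j}$ and returns $t+1$ balls to each, so both bins keep exactly $d$ balls and no other bin is altered.

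The heart of the matter is the weight bookkeeping. Writing the net change of $w(\mc{B}_{R_i})$ as
\[
\Delta = (t+1)\cdot w_j - t\cdot w_1 - w_l = (w_j-w_l) - t\cdot\bigl(w_1-w_j\bigr),
\]
and using $g_i(\ul{w})=w_1-w^{2nd}=w_1-w_j$ (valid since $w_i=w_1$) together with $t=\bigl\lfloor (w_j-w_l)/g_i(\ul{w})\bigr\rfloor$, the defining inequalities of the floor yield $t\cdot g_i(\ul{w})\leq w_j-w_l<(t+1)\cdot g_i(\ul{w})$, hence $0\leq\Delta<g_i(\ul{w})$. Thus $w(\mc{B}_{R_i})$ increases by $\Delta$ and $w(\mc{B}_{R_j})$ decreases by $\Delta$.

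It then remains to check the gaps. Since $w(\mc{B}_{R_j})$ only decreases, $g_{R_j}(\mc{B}_{\bullet})$ grows and stays non-negative; and since $g_{R_i}(\mc{B}_{\bullet})\geq g_i(\ul{w})$ just before the swap by the loop condition in line~$12$, while $g_{R_i}(\mc{B}_{\bullet})$ drops by exactly $\Delta<g_i(\ul{w})$, the gap $g_{R_i}(\mc{B}_{\bullet})$ remains positive afterwards. As $i,j\geq 2$ force $R_i,R_j\neq 1$ and all remaining gaps are untouched, every gap indexed $2,\cdots,n$ stays non-negative and $\mc{B}_{\bullet}$ is still $1$-feasible. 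I expect the only real obstacle to be this last estimate: the precise rounding in the definition of $t$ is exactly what keeps the transferred weight $\Delta$ non-negative yet strictly below $g_i(\ul{w})$, so that it can be absorbed into the slack guaranteed by the loop condition without ever pushing a gap below zero.
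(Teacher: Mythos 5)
Your proof is correct and follows essentially the same route as the paper's: ball availability via the invariant from Lemma~\ref{lem:lines-13-14-15} together with $t+1\leq w_1$, and $1$-feasibility via the computation $\Delta=(w_j-w_l)-t\cdot g_i(\ul{w})\in[0,g_i(\ul{w}))$ absorbed by the loop condition $g_{R_i}(\mc{B}_{\bullet})\geq g_i(\ul{w})$. Your version is marginally sharper (strict inequality $\Delta<g_i(\ul{w})$, where the paper only uses $\leq$), but the argument is the same.
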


\begin{proof}
 As explained in the proof of Lemma~\ref{lem:lines-13-14-15}, we have at every point that each $\mc{B}_{R_a}$ contains at least $w_1$ balls of weight $w_a$. Since $t+1\leq w_1$, we get that $\mc{B}_{R_j}$ contains $(t+1)$ balls of weight $w_j$. By Lemma~\ref{lem:lines-13-14-15} we know that $w_i=w_1$, so $\mc{B}_{R_i}$ contains $t$ balls of weight $w_1$. Since $\mc{B}_{R_i}$ also contains a ball of weight $w_l$ by Lemma~\ref{lem:lines-13-14-15}, the swap can be performed.

To check $1$-feasibility, note that the swap in line $17$ adds to $g_{R_j}(\mc{B}_{\bullet})$ (and subtracts from $g_{R_i}(\mc{B}_{\bullet})$)
\[(t+1)\cdot w_j - t\cdot w_i -w_l = (w_j-w_l) - t\cdot(w_i-w_j).\]
Noting that $w_i-w_j=g_i(\ul{w})$, we see by the choice of $t$ that the quantity above is a non-negative integer $\leq g_i(\ul{w})$. Since $g_{R_i}(\mc{B}_{\bullet})\geq g_i(\ul{w})$ before the swap, the value of $g_{R_i}(\mc{B}_{\bullet})$ remains non-negative after the swap. Since this is the only gap that is decreased, $1$-feasibility is preserved.
\end{proof}

\begin{lemma}\label{lem:swap-18}
 The swap in line $19$ is possible and the resulting $\mc{B}_{\bullet}$ stays $1$-feasible.
\end{lemma}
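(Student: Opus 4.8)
The plan is to verify the two assertions separately: that the prescribed balls are available, and that the single gap which shrinks stays non-negative. First I would pin down the state of the algorithm when line~19 is reached. It lies in the else-branch of line~13 inside the second while loop, so at that moment $w_i\geq w^{2nd}$, the inner-loop condition $g_{R_i}(\mc{B}_{\bullet})\geq g_i(\ul{w})$ holds, and $\mc{B}_{R_i}$ contains at least one ball of weight $w^{2nd}$ (this is precisely the failure of the test in line~13). Since $i\geq 2$ and $R_1=1$, the permutation $R$ gives $R_i\neq 1$, so $\mc{B}_{R_i}\neq\mc{B}_1$.

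For the availability of the two balls, the ball of weight $w^{2nd}$ in $\mc{B}_{R_i}$ is handed to us directly by the else-branch condition, so the only point requiring care is the ball of weight $w_1$ in $\mc{B}_1$. Here I would show that $\mc{B}_1$ is untouched by every non-terminating swap: the swaps in lines~5 and~17 move balls only between bins $\mc{B}_{R_a}$ with $a\geq 2$ --- immediate for line~17 (where $i,j,l\geq 2$), and for line~5 because $w_i<w^{2nd}$ forces $p(w_i)\neq w_1$, hence $w_j=p(w_i)<w_1$ and $j\geq 2$. The only swaps that remove a ball from $\mc{B}_1$ are those in lines~7 and~19, and each of these is immediately followed by ``go to line~22'', terminating the algorithm. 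Therefore, when line~19 is executed for the first time, no ball has yet left $\mc{B}_1$, so $n_1(\mc{B}_1)\geq r\geq 1$ still holds and a ball of weight $w_1$ is present.

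For $1$-feasibility, I would note that the swap lowers $w(\mc{B}_1)$ by $w_1-w^{2nd}>0$, raises $w(\mc{B}_{R_i})$ by the same amount, and changes no other bin; hence $g_1(\mc{B}_{\bullet})$ only increases (which incidentally gives the strict decrease of $w(\mc{B}_1)$ that the algorithm wants), and among the gaps with index $\geq 2$ only $g_{R_i}(\mc{B}_{\bullet})$ is affected, dropping by $w_1-w^{2nd}$. The decisive observation is the identity
\[ g_i(\ul{w}) = w_1 - w^{2nd}\quad\text{whenever } w_i\geq w^{2nd}, \]
which follows from the definition of the gap sequence: if $w_i=w_1$ then $g_i(\ul{w})=w_1-s(w_1)=w_1-w^{2nd}$, while if $w_i=w^{2nd}\neq w_1$ then $p(w_i)=w_1$ and $g_i(\ul{w})=p(w_i)-w_i=w_1-w^{2nd}$; these exhaust the range $w_i\geq w^{2nd}$ since $w^{2nd}=s(w_1)$. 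As the inner-loop condition guarantees $g_{R_i}(\mc{B}_{\bullet})\geq g_i(\ul{w})=w_1-w^{2nd}$ before the swap, we get $g_{R_i}(\mc{B}_{\bullet})\geq 0$ afterwards, and since $R_i\neq 1$ while all other gaps of index $\geq 2$ are unchanged, $\mc{B}_{\bullet}$ stays $1$-feasible. I expect the main obstacle to be the bookkeeping of the second paragraph --- certifying that $\mc{B}_1$ has not been stripped of its weight-$w_1$ balls before the terminating swap --- since the gap identity, though essential, is a short direct computation.
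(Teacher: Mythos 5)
Your proof is correct and follows essentially the same route as the paper's: availability of the weight-$w^{2nd}$ ball from the failure of line~13, availability of a weight-$w_1$ ball in $\mc{B}_1$, and $1$-feasibility from the identity $w_1-w^{2nd}=g_i(\ul{w})$ combined with the line-12 inequality. Your second paragraph (checking that the swaps in lines~5 and~17 never touch $\mc{B}_1$, so it still holds its original $n_1(\mc{B}_1)\geq r$ balls of weight $w_1$) and your two-case verification of the gap identity are just more explicit versions of steps the paper asserts tersely.
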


\begin{proof}
 Since $r>0$, $\mc{B}_1$ contains at least one ball of weight $w_1$. If the condition in line 13 fails, then $\mc{B}_{R_i}$ contains a ball of weight $w^{2nd}$, so the swap can be performed. Since the only gap that is decreased is $g_{R_i}(\mc{B}_{\bullet})$, and the decrease is by $w_1-w^{2nd}=g_i(\ul{w})$, the conclusion follows using the inequality in line 12.
\end{proof}

\begin{proof}[Proof of Theorem~\ref{thm:shrinking-gaps}]
 The partition $\mc{B}'_{\bullet}$ returned by the algorithm occurs in one of the following ways:
 \begin{itemize}
  \item After the swap in line 7: since $w_1>w_i$, we have that $w(\mc{B}'_1)<w(\mc{B}_1)$.
  \item After the swap in line 19: since $w_1>w^{2nd}$, we have that $w(\mc{B}'_1)<w(\mc{B}_1)$.
  \item After the completion of the loop in lines 11--21: this implies that $g_{R_i}(\mc{B}'_{\bullet})<g_i(\ul{w})$ for all $i\geq 2$ for which $w_i\geq w^{2nd}$. Combining this with Remark~\ref{rem:lines-1-9}(1), we conclude that the inequalities (\ref{eq:small-gi}) hold for $\mc{B}'_{\bullet}$, so by Corollary~\ref{cor:small-gi}, $\mc{B}'_{\bullet}$ is feasible. This means that $w(\mc{B}'_1)\leq C_1<w(\mc{B}_1)$.\qedhere
 \end{itemize}
\end{proof}

\begin{example}\label{ex:alg-gaps}
 To indicate how Algorithm~\ref{alg:gaps} works, we consider the following example. We take $n=6$, $\ul{w}=(5,5,3,1,1,0)$, $d=6$, $r=2$, $\ul{C}=(17^5,8)$, and the partition $\mc{B}_{\bullet}$ and permutation $R_{\bullet}$ given below:
\[
\setlength{\extrarowheight}{2pt}
\begin{array}{c|c|c|c|c|c|c}
a & 1 & 2  & 3 & 4 & 5 & 6 \\
 \hline 
\mc{B}_a & 5^2 3^2 1^2 & 5^1 3^4 0^1 & 5^3 1^2 0^1 & 5^3 1^1 0^2 & 5^3 1^1 0^2 & 1^6 \\ 
\hline
R_a & 1 & 4  & 2 & 3 & 6 & 5 \\
 \hline 
w(\mc{B}_a) & 18 & 17 & 17 & 16 & 16 & 6 \\
\hline
g_{R_a}(\mc{B}_\bullet) & -1 & 1 & 0 & 0 & 2 & 1 \\
\hline
g_a(\ul{w}) &  & 2 & 2 & 2 & 2 & 1 \\
\end{array}
\]
The table below indicates how the partition $\mc{B}_{\bullet}$ changes as we run through the algorithm; a blank space means the corresponding part remains unchanged. The double line separates the first half of the algorithm (lines 1--9, where $i\in\{6,5,4\}$) from the second half (lines 10--20, where $i\in\{2,3\}$). In the leftmost column we indicate the line of the algorithm and the value of the relevant parameters where the exchange modifying $\mc{B}_{\bullet}$ occurs.
\[
\setlength{\extrarowheight}{2pt}
\begin{array}{c|c|c|c|c|c|c|c}
& a & 1 & 2  & 3 & 4 & 5 & 6 \\
 \hline 
\mbox{initial }\mc{B}_{\bullet} & \mc{B}_a & 5^2 3^2 1^2 & 5^1 3^4 0^1 & 5^3 1^2 0^1 & 5^3 1^1 0^2 & 5^3 1^1 0^2 & 1^6 \\ 
\hline
i = 6,\ j=5,\mbox{ line }5 & \mc{B}_a &  &  &  &  & 5^3 1^2 0^1 & 1^5 0^1 \\ 
\hline
i = 5,\ j=3,\mbox{ line }5 & \mc{B}_a &  & 5^1 3^3 1^1 0^1 &  &  &  & 3^1 1^4 0^1 \\ 
\hline
\hline
i = 3,\mbox{ line }19 & \mc{B}_a & 5^1 3^3 1^2 & 5^2 3^2 1^1 0^1 & &  & & \\ 
\end{array}
\]
Note that the resulting partition is the one considered in Example~\ref{ex:feasible-BP} and is feasible.
\end{example}

In the previous example the condition in line 13 was never satisfied, so lines 14--17 were never executed. To illustrate their contribution to the algorithm we consider the following.

\begin{example}\label{ex:lines-14-17}
We take $n=5$, $\ul{w}=(5,5,5,1,0)$, $d=5$, $r=3$, $\ul{C}=(24^3,15,2)$, and the partition $\mc{B}_{\bullet}$ and permutation $R_{\bullet}$ given below:
\[
\setlength{\extrarowheight}{2pt}
\begin{array}{c|c|c|c|c|c}
a & 1 & 2  & 3 & 4 & 5  \\
 \hline 
\mc{B}_a & 5^5 & 5^4 0^1 & 5^4 0^1 & 5^2 1^3 & 1^2 0^3 \\ 
\hline
R_a & 1 & 2  & 3 & 4 & 5 \\
 \hline 
w(\mc{B}_a) & 25 & 20 & 20 & 13 & 2 \\
\hline
g_{R_a}(\mc{B}_\bullet) & -1 & 4 & 4 & 2 & 0 \\
\hline
g_a(\ul{w}) &  & 4 & 4 & 4 & 1 \\
\end{array}
\]
The first half of the algorithm (lines 1--9) do not affect $\mc{B}_{\bullet}$. Using the same conventions as in Example~\ref{ex:alg-gaps}, we record the evolution of $\mc{B}_{\bullet}$ in the following table.
\[
\setlength{\extrarowheight}{2pt}
\begin{array}{c|c|c|c|c|c|c}
& a & 1 & 2  & 3 & 4 & 5 \\
 \hline 
\mbox{initial }\mc{B}_{\bullet} & \mc{B}_a & 5^5 & 5^4 0^1 & 5^4 0^1 & 5^2 1^3 & 1^2 0^3 \\ 
\hline
\hline
i = 2,\ j=4,\ l=5,\ t=0,\mbox{ line }17 & \mc{B}_a &  & 5^4 1^1 &   & 5^2 1^2 0^1 &  \\ 
\hline
i = 3,\ j=4,\ l=5,\ t=0,\mbox{ line }17 & \mc{B}_a &  &  &  5^4 1^1 & 5^2 1^1 0^2 &  \\ 
\hline
i = 4,\mbox{ line }19 & \mc{B}_a & 5^4 1^1 &  &  & 5^3 0^2 &  \\ 
\end{array}
\]
The resulting partition is therefore
\[
\setlength{\extrarowheight}{2pt}
\begin{array}{c|c|c|c|c|c}
a & 1 & 2  & 3 & 4 & 5 \\
 \hline 
\mc{B}'_a & 5^4 1^1 & 5^4 1^1 & 5^4 1^1 & 5^3 0^2 & 1^2 0^3 \\ 
\end{array}
\]
It satisfies $w(\mc{B}'_1)=21<25=w(\mc{B}_1)$, and in fact it is feasible.
\end{example}

\section{Optimality of the constant $b(\ul{w})$}\label{sec:optimality}

The goal of this section is to prove Theorem~\ref{thm:optimal-bw}. Example~\ref{ex:optimal-1-feasible} at the end of the section may be helpful in following the notation and details of the proof. We let $\ll=\ul{w}$ and write $\ll'=(n^{a_0},h_1^{a_1},h_2^{a_2},\cdots,h_k^{a_k})$, noting that $k>0$ as in the proof of Lemma~\ref{lem:bw=sum-gaps}. We set $h_0=n$ and recall that $w^{2nd}=s(w_1)$ denotes the second largest weight in $\ul{w}$. We define a sequence of capacities $\ul{C}^{\circ}$ as follows:
\begin{itemize}
 \item $C^{\circ}_1 = C^{\circ}_2 = \cdots = C^{\circ}_{h_k} = d\cdot w_1-1$.
 \item $C^{\circ}_{h_k+1} = (h_k-1)\cdot w_1 + (d-h_k+1)\cdot w^{2nd} +  (a_k-1)$.
 \item $C^{\circ}_j = d\cdot w^{2nd} + (a_k-1) = d w_j + (a_k-1)$ for $h_k+1 < j \leq h_{k-1}$.
 \item $C^{\circ}_j = d\cdot w_j + (a_{k-t}-1)$ for $t=1,\cdots,k-1$, and $h_{k-t} < j \leq h_{k-1-t}$.
\end{itemize}
We first check that condition (\ref{eq:opt-1}) is satisfied. By Definition~\ref{def:b-lam} and the proof of Lemma~\ref{lem:bw=sum-gaps}, we have
\[ g_j(\ul{w}) = \begin{cases}
 a_k & 2\leq j\leq h_{k-1}; \\
 a_{k-t} & h_{k-t} < j \leq h_{k-1-t},\ 1\leq t\leq k-1.
\end{cases}
\]
It follows from the definition of $\ul{C}^{\circ}$ that if we let $\Delta^{\circ}_j=C^{\circ}_j - d\cdot w_j$ then 
\begin{equation}\label{eq:formulas-Ccirc}
\Delta^{\circ}_j = \begin{cases}
 -1 & 1\leq j\leq h_k; \\
 (h_k-1)\cdot a_k + (a_k-1) = (h_k-1)\cdot a_k + (g_j(\ul{w})-1) &  j= h_k+1; \\
 a_k-1 = g_j(\ul{w})-1&  h_k+1 < j \leq h_{k-1}; \\
 a_{k-t}-1 = g_j(\ul{w})-1 & h_{k-t} < j \leq h_{k-1-t},\ 1\leq t\leq k-1.
\end{cases}
\end{equation}
Summing over $j=2,\cdots,n$ and using Lemma~\ref{lem:bw=sum-gaps} we conclude that
\[
\begin{aligned}
 \sum_{j=1}^n (C^{\circ}_j - d\cdot w_j) &=  -h_k + (h_k-1)\cdot a_k + \sum_{j=h_k+1}^n(g_j(\ul{w})-1) \\
 &= -1 + (h_k-1)\cdot (a_k-1) + \sum_{j=h_k+1}^n(g_j(\ul{w})-1) \\
 &= -1+\sum_{j=2}^n(g_j(\ul{w})-1) = b(\ul{w})-1.
\end{aligned}
\]
We next check that $\ul{C}^{\circ}$ is non-decreasing. Since $d\geq n\geq h_k$, we have that
\[ C^{\circ}_{h_k} - C^{\circ}_{h_k+1} = (d-h_k+1)\cdot(w_1-w^{2nd}) - a_k = (d-h_k)\cdot a_k \geq 0.\]
Similarly, we get that $C^{\circ}_{h_k+1} \geq d\cdot w^{2nd} + (a_k-1)$. When $j=h_{k-t}+1$ for $1\leq t\leq k-1$, we have that the predecessor of $w_j$ is $p(w_j)=w_{j-1}$, so $b_j(\ul{w}) = w_{j-1}-w_j = a_{k-t}$, and thus
\[ C^{\circ}_j = d\cdot w_j + (a_{k-t}-1) = d\cdot w_j + (w_{j-1}-w_j-1) < d\cdot w_{j-1} \leq C^{\circ}_{j-1}.\]

We next show that $\mf{BP}(d,\ul{C}^{\circ};\ul{w})$ is $1$-feasible. We consider the partition $\mc{B}^{\circ}_{\bullet}$ defined by
\begin{itemize}
 \item $\mc{B}^{\circ}_1=\{w_1^d\}$.
 \item $\mc{B}^{\circ}_2 = \cdots = \mc{B}^{\circ}_{h_k} = \{w_1^{d-1},w^{2nd}\}$.
 \item $\mc{B}^{\circ}_{h_k+1} = \{w_1^{h_k-1},(w^{2nd})^{d-h_k+1}\}$.
 \item $\mc{B}^{\circ}_j = \{w_j^d\}$ for $j\geq h_k+2$.
\end{itemize}
Note that we are using the fact that $d\geq h_k-1$ in order for the definition of $\mc{B}^{\circ}_{h_k+1}$ to make sense. The earlier calculations show that $g_1(\mc{B}^{\circ}_{\bullet})=-1$, and $g_j(\mc{B}^{\circ}_{\bullet})=g_j(\ul{w})\geq 0$ for $j\geq 2$, so $\mc{B}^{\circ}_{\bullet}$ is $1$-feasible. 

To finish the proof, we need to verify that there exists no feasible partition $\mc{B}_{\bullet}$. Suppose by contradiction that there is one such $\mc{B}_{\bullet}$: for $j\geq h_k+2$, we prove by descending induction on $j$ that 
\begin{equation}\label{eq:union-Bj-n}
\mc{B}_j \sqcup \mc{B}_{j+1}\sqcup \cdots \sqcup \mc{B}_n = \{w_j^d,w_{j+1}^d,\cdots,w_n^d\}.
\end{equation}
When $j>n$ there is nothing to prove. Suppose that (\ref{eq:union-Bj-n}) holds for some $j>h_k+2$, so that $\mc{B}_1\sqcup\cdots\sqcup\mc{B}_{j-1}$ contains no ball of weight smaller than $w_{j-1}$. If $\mc{B}_{j-1}$ contains a ball of weight larger than $w_{j-1}$ then that weight is at least $p(w_{j-1}) = w_{j-1} + g_{j-1}(\ul{w})$. It follows that
\[ w(\mc{B}_{j-1}) \geq (d-1)\cdot w_{j-1} + (w_{j-1} + g_{j-1}(\ul{w})) = C^{\circ}_{j-1} + 1,\]
which contradicts the fact that $\mc{B}_{\bullet}$ is feasible. We conclude that $\mc{B}_{j-1} = \{w_{j-1}^d\}$, proving the induction step.

Since $w_1=\cdots=w_{h_k}$ and $w_{h_k+1}=w^{2nd}$, it follows that
\[ \mc{B}_1 \sqcup \cdots \sqcup \mc{B}_{h_k+1} = \{ w_1^{d\cdot h_k}, (w^{2nd})^d\}.\]
Since $C^{\circ}_1=\cdots=C^{\circ}_{h_k}<d\cdot w_1$, it follows that each of $\mc{B}_1,\cdots,\mc{B}_{h_k}$ contains at most $(d-1)$ balls of weight~$w_1$. This implies that $\mc{B}_{h_k+1}$ must contain at least $h_k$ balls of weight $w_1$, so
\[ (h_k-1)\cdot w_1 + (d-h_k+1)\cdot w^{2nd} +  (a_k-1) = C^{\circ}_{h_k+1} \geq w(\mc{B}_{h_k+1}) \geq h_k\cdot w_1 + (d-h_k)\cdot w^{2nd},\]
which implies that $a_k-1 \geq w_1-w^{2nd} = a_k$, a contradiction. This proves that $\mf{BP}(d,\ul{C};\ul{w})$ is not feasible, as desired.

\begin{example}\label{ex:optimal-1-feasible}
 If $\ll = \ul{w}=(5,5,3,1,1,0)$ then $\ll' = (5^1,3^2,2^2)$, so $k=3$, $h_1=5$, $h_2=3$, $h_3=2$, $a_1=1$, $a_2=a_3=2$. If we take $d=6$ then the partition $\mc{B}^{\circ}_{\bullet}$ and the bin capacity sequence are as follows.
\[
\setlength{\extrarowheight}{2pt}
\begin{array}{c|c|c|c|c|c|c}
a & 1 & 2  & 3 & 4 & 5 & 6 \\
 \hline 
\mc{B}^{\circ}_a & 5^6 & 5^5 3^1 & 5^1 3^5 & 1^6 & 1^6 & 0^6 \\ 
 \hline 
C^{\circ}_a & 29 & 29 & 21 & 7 & 7 & 0 \\ 
 \hline 
w(\mc{B}^{\circ}_a) & 30 & 28 & 20 & 6 & 6 & 0 \\ 
\end{array}
\]
Note that $\mc{B}^{\circ}_{\bullet}$ is $1$-feasible but not feasible, and that $g_i(\mc{B}^{\circ}_{\bullet}) = g_i(\ul{w})-1$ for all $i\geq 2$. Recall that $b(\ul{w})=4$, and note that $C^{\circ}_1+\cdots+C^{\circ}_n=93$, and $d\cdot(w_1+\cdots+w_n)=90$, just as in Example~\ref{ex:optimality-main-thm}. The key difference is that in this case no feasible solution exists!
\end{example}

\section{The proof of the feasibility criterion}\label{sec:feasibility}

The goal of this section is to prove Theorem~\ref{thm:sufficient-condition}. The fact that conditions (\ref{eq:sufficient-cond-C-w}) are sufficient for feasibility follows inductively from Theorem~\ref{thm:inductive-feasibility} and is explained in Section~\ref{subsec:sufficiency}. The optimality of the conditions however is not a formal consequence of Theorem~\ref{thm:optimal-bw}, and we discuss this issue in Section~\ref{subsec:main-thm-optimal}. We start with the following useful observation.

\begin{lemma}\label{lem:feasibility-truncations}
 Suppose that $i\geq 1$, $j\geq 0$. We have that $\mf{BP}(d,\ul{C};\ul{w})$ is $(i+j-1)$-feasible if and only if $\mf{BP}(d,\ul{C}^{\geq i};\ul{w}^{\geq i})$ is $j$-feasible.
\end{lemma}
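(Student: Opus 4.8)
The plan is to prove both directions by recognizing that the two partitioning problems in the statement are really describing the same collection of assignments, just with different bookkeeping for which bins are allowed to overflow. Recall from the definitions in the excerpt that $\ul{w}^{\geq i}=(w_i,w_{i+1},\cdots,w_n)$ and, analogously, $\ul{C}^{\geq i}=(C_i,\cdots,C_n)$, and that a problem being $k$-feasible means there is an assignment of $d$ balls to each bin respecting capacities in bins $k+1,\cdots,n$ (equivalently, the gaps $g_\ell\geq 0$ for $\ell>k$). So the key observation is that the constraints in $\mf{BP}(d,\ul{C};\ul{w})$ indexed by bins $i+j,i+j+1,\cdots,n$ are exactly the constraints in $\mf{BP}(d,\ul{C}^{\geq i};\ul{w}^{\geq i})$ indexed by its bins $j+1,\cdots,n-i+1$, once we reindex by shifting.

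First I would set up the bookkeeping carefully. In $\mf{BP}(d,\ul{C};\ul{w})$ there are $n$ weight-classes (with $d$ balls each) and $n$ bins; $(i+j-1)$-feasibility asks for an assignment satisfying $w(\mc{B}_\ell)\leq C_\ell$ for $\ell=i+j,\cdots,n$. In $\mf{BP}(d,\ul{C}^{\geq i};\ul{w}^{\geq i})$ there are $n-i+1$ weight-classes of weights $w_i,\cdots,w_n$ and $n-i+1$ bins with capacities $C_i,\cdots,C_n$; $j$-feasibility asks for an assignment satisfying the constraint in bins $j+1,\cdots,n-i+1$, which after the index shift $\ell \mapsto \ell - i + 1$ are precisely $w(\mc{B}_{i+j})\le C_{i+j},\cdots,w(\mc{B}_n)\le C_n$. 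The issue, and what makes the statement nontrivial rather than a tautology, is that the two problems do \emph{not} use the same set of balls: the truncated problem has fewer weight-classes (the classes $w_1,\cdots,w_{i-1}$ are simply absent), yet the bin \emph{count} on the overflowing side also drops accordingly.

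For the forward direction I would take an $(i+j-1)$-feasible assignment $\mc{B}_\bullet$ for the full problem and \emph{restrict attention to the last $n-i+1$ bins} $\mc{B}_i,\cdots,\mc{B}_n$. The plan is to argue that these bins can be made to contain only balls of weights $w_i,\cdots,w_n$: since the enforced constraints only concern bins $\geq i+j$ and the weights are ordered, one can reassign any heavier ball (weight among $w_1,\cdots,w_{i-1}$) sitting in bins $i,\cdots,n$ out to bins $1,\cdots,i-1$ (which carry no capacity constraint in either problem) by swapping with lighter balls, without increasing the weight of any constrained bin. This produces a valid $j$-feasible assignment for the truncated problem. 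Conversely, given a $j$-feasible assignment for $\mf{BP}(d,\ul{C}^{\geq i};\ul{w}^{\geq i})$ using weights $w_i,\cdots,w_n$ in bins reindexed as $i,\cdots,n$, I would \emph{extend} it to the full problem by placing all $d$ balls of each weight $w_1,\cdots,w_{i-1}$ into bins $\mc{B}_1,\cdots,\mc{B}_{i-1}$, which is possible by a simple counting check ($d\cdot(i-1)$ balls into $i-1$ bins of capacity $d$ each) and which leaves every constrained bin untouched.

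The main obstacle I expect is the first direction: showing that a general $(i+j-1)$-feasible assignment can be \emph{rearranged} so that the last $n-i+1$ bins use only the light weights $w_i,\cdots,w_n$, while simultaneously guaranteeing that the front bins $\mc{B}_1,\cdots,\mc{B}_{i-1}$ can absorb exactly the $d$ balls of each heavy weight without a ball-count conflict. This is a flow/counting argument: there are exactly $d\cdot(i-1)$ heavy balls and exactly $d\cdot(i-1)$ slots in the front bins, so the counts match, but one must verify that the swaps can be carried out simultaneously (e.g. via a greedy exchange, always swapping a heavy ball in a back bin against a lighter ball in a front bin, using that the weights are non-increasingly ordered so such a lighter partner always exists until the front bins are saturated with heavy balls). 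Once this rearrangement lemma is in hand, the equivalence of the two constraint sets under the index shift makes both implications immediate.
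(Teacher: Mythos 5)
Your overall strategy is the same as the paper's: your reverse direction (extend a truncated solution by placing the $d$ balls of each weight $w_t$, $t<i$, into the unconstrained front bins) is exactly the paper's argument, and your forward direction is the same swap idea the paper uses. The gap is in how you execute the forward direction, and it is concentrated in one issue you gloss over: repeated weights across the truncation point. You define ``heavy'' balls as those whose weight \emph{value} lies in $\{w_1,\cdots,w_{i-1}\}$ and assert that ``there are exactly $d\cdot(i-1)$ heavy balls, so the counts match.'' This is false whenever $w_{i-1}=w_i$. For example, take $\ul{w}=(3,2,2,1)$ and $i=3$: there are $3d$ balls of weight in $\{w_1,w_2\}=\{3,2\}$, not $2d$, so your prescription to expel every such ball from the back bins is not even executable, and your counting justification collapses. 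Worse, your greedy with the stopping rule ``until the front bins are saturated with heavy balls'' can halt at a wrong configuration: with $i=3$, $j=0$, capacities $\ul{C}=(4d,4d,3d,2d)$, start from the $2$-feasible partition $\mc{B}_1=\{1^d\}$, $\mc{B}_2=\{2^d\}$, $\mc{B}_3=\{3^d\}$, $\mc{B}_4=\{2^d\}$. Swapping the $d$ weight-$2$ balls of $\mc{B}_4$ against the $d$ weight-$1$ balls of $\mc{B}_1$ is a legal run of your swaps; afterwards the front bins $\mc{B}_1\sqcup\mc{B}_2=\{2^{2d}\}$ consist entirely of heavy balls, so your procedure stops, yet $\mc{B}_3\sqcup\mc{B}_4=\{3^d,1^d\}$, which is not the ball multiset $\{2^d,1^d\}$ of the truncated problem $\mf{BP}(d,\ul{C}^{\geq 3};\ul{w}^{\geq 3})$. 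No $j$-feasible solution of the truncated problem has been produced.

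The paper resolves exactly this point with a small but essential device: it \emph{designates} $d$ balls of weight $w_t$ for each $t<i$ as large (a choice among indistinguishable balls, not a condition on weight values) and calls the remaining $d\cdot(n-i+1)$ balls small. Then the large balls form precisely the multiset $\{w_1^d,\cdots,w_{i-1}^d\}$, there are exactly $d(i-1)$ of them, every large ball weighs at least as much as every small ball, and the number of small balls in $\mc{B}_1\sqcup\cdots\sqcup\mc{B}_{i-1}$ equals the number of large balls in $\mc{B}_i\sqcup\cdots\sqcup\mc{B}_n$; swapping these pairs one by one never increases $w(\mc{B}_t)$ for $t\geq i+j$ and terminates exactly at (\ref{eq:Bi-sqcup-Bn}). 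Your argument can be repaired either by adopting this designation, or by changing your stopping rule to ``while some heavy ball in a back bin has a strictly lighter partner in a front bin'' --- but in the latter case the claim that termination forces the back bins to carry the multiset $\{w_i^d,\cdots,w_n^d\}$ is no longer a counting triviality and requires a separate argument, which your write-up does not contain.
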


\begin{proof} ``$\Leftarrow$": consider a $j$-feasible solution of $\mf{BP}(d,\ul{C}^{\geq i};\ul{w}^{\geq i})$, so that
\begin{equation}\label{eq:Bi-sqcup-Bn}
 \mc{B}_i \sqcup \mc{B}_{i+1}\sqcup \cdots \sqcup \mc{B}_n = \{w_i^d,\cdots,w_n^d\}.
\end{equation}
and $w(\mc{B}_t) \leq C_t$ for $t\geq i+j$. If we define $\mc{B}_t = \{w_t^d\}$ for $t<i$ then $\mc{B}_1\sqcup\cdots\sqcup\mc{B}_n$ is an $(i+j-1)$-feasible solution of $\mf{BP}(d,\ul{C};\ul{w})$, proving the implication.

\noindent``$\Rightarrow$": Let $\mc{B}_1\sqcup\cdots\sqcup\mc{B}_n$ be an $(i+j-1)$-feasible solution of $\mf{BP}(d,\ul{C};\ul{w})$, so that $w(\mc{B}_t) \leq C_t$ for $t\geq i+j$. If (\ref{eq:Bi-sqcup-Bn}) holds then it follows that it provides a $j$-feasible solution of $\mf{BP}(d,\ul{C}^{\geq i};\ul{w}^{\geq i})$, as desired. If (\ref{eq:Bi-sqcup-Bn}) does not hold then we perform a sequence of ball swaps that preserve the inequalities $w(\mc{B}_t) \leq C_t$ for $t\geq i+j$ and leads to a partition $\mc{B}_{\bullet}$ satisfying (\ref{eq:Bi-sqcup-Bn}), as follows. 

We choose $d\cdot(i-1)$ balls with $d$ of weight $w_j$ for each $j=1,\cdots,i-1$, and designate them as \emph{large}, and we designate the remaining $d\cdot(n-i+1)$ balls as \emph{small}. We write
\begin{equation}\label{eq:def-B-truncs}
 \mc{B}^{\leq i-1} = \mc{B}_1\sqcup\cdots\sqcup\mc{B}_{i-1}\mbox{ and }\mc{B}^{\geq i} = \mc{B}_i \sqcup \cdots \sqcup \mc{B}_n,
\end{equation}
and note that if $\mc{B}^{\geq i}$ consists entirely of small balls then (\ref{eq:Bi-sqcup-Bn}) holds. Note that if we swap a small ball from $\mc{B}^{\leq i-1}$ with a large ball from $\mc{B}^{\geq i}$, then the value of $w(\mc{B}_t)$ can only go down for $t\geq i+j$. It follows that after swapping each small ball from $\mc{B}^{\leq i-1}$ with a corresponding large ball from $\mc{B}^{\geq i}$, we get (\ref{eq:Bi-sqcup-Bn}), as desired.
\end{proof}

\subsection{Sufficiency}\label{subsec:sufficiency}

In this section we assume that (\ref{eq:sufficient-cond-C-w}) holds and show that $\mf{BP}(d,\ul{C};\ul{w})$ is feasible for $d\geq d_{\ul{w}}$ (note that our choice implies $d_{\ul{w}}=d^1_{\ul{w}}\geq d^1_{\ul{w}^{\geq i}}$ for all $i$). For $i\geq 0$, we prove by descending induction, starting with $i=n$, that $\mf{BP}(d,\ul{C};\ul{w})$ is $i$-feasible. When $i=n$ we have that $\ul{w}^{\geq n} = (w_n)$ is a singleton, therefore $b(\ul{w}^{\geq n})=0$, and (\ref{eq:sufficient-cond-C-w}) implies $C_n\geq d\cdot w_n$. If we let $\mc{B}_n=\{w_n^d\}$ and distribute $d$ balls to each of the bins $\mc{B}_1,\cdots,\mc{B}_{n-1}$ in an arbitrary fashion, then we obtain an $n$-feasible solution of $\mf{BP}(d,\ul{C};\ul{w})$.

Suppose now that $1\leq i\leq n$ and that $\mf{BP}(d,\ul{C};\ul{w})$ is $i$-feasible. We have that $\mf{BP}(d,\ul{C}^{\geq i};\ul{w}^{\geq i})$ is $1$-feasible by taking $j=1$ in Lemma~\ref{lem:feasibility-truncations}, and by Theorem~\ref{thm:inductive-feasibility} and the hypothesis
\[ C_i + \cdots + C_n \geq d\cdot(w_i+\cdots+w_n) + b(\ul{w}^{\geq i})\]
we conclude that $\mf{BP}(d,\ul{C}^{\geq i};\ul{w}^{\geq i})$ is also feasible (that is, $0$-feasible). Applying Lemma~\ref{lem:feasibility-truncations} with $j=0$ we conclude that $\mf{BP}(d,\ul{C};\ul{w})$ is $(i-1)$-feasible, proving the inductive step and concluding the proof.

\subsection{Conditions (\ref{eq:sufficient-cond-C-w}) are optimal when the weights are distinct}\label{subsec:main-thm-optimal}

To indicate the subtlety involved in verifying the optimality of (\ref{eq:sufficient-cond-C-w}), we start with an example showing that the construction of $\ul{C}^{\circ}$ in Section~\ref{sec:optimality} is not sufficient in genereal. Recall that our goal is to show that if we relax any of the conditions (\ref{eq:sufficient-cond-C-w}) then there exists a sequence $\ul{C}$ which satisfies the relaxed conditions and defines an infeasible problem.

\begin{example}\label{ex:C0-not-good}
 If $\ll = \ul{w}=(6,6,4,4,4,0)$ then $\ll' = (5^4,2^2)$, so $k=2$, $h_1=5$, $h_2=2$, $a_1=4$, $a_2=2$. We assume that $d\gg 0$ and consider the following table recording $\ul{C}^{\circ}-d\cdot\ul{w}$ and the numbers $b(\ul{w}^{\geq j})$.
\[
\setlength{\extrarowheight}{2pt}
\begin{array}{c|c|c|c|c|c|c}
j & 1 & 2  & 3 & 4 & 5 & 6 \\
 \hline 
C^{\circ}_j - d\cdot w_j & -1 & -1 & 3 & 1 & 1 & 3 \\ 
 \hline 
b(\ul{w}^{\geq j}) & 7 & 6 & 9 & 6 & 3 & 0 \\ 
\end{array}
\]
Recall that $\ul{C}^{\circ}$ was constructed to fail condition (\ref{eq:sufficient-cond-C-w}) for $i=1$, but in fact it also fails it for $i=2,3,4$. A~better choice of a capacity sequence is in this case to take
\begin{equation}\label{eq:modified-C0}
\setlength{\extrarowheight}{2pt}
\begin{array}{c|c|c|c|c|c|c}
j & 1 & 2  & 3 & 4 & 5 & 6 \\
 \hline 
C_j - d\cdot w_j & -1 & -5 & 5 & 3 & 1 & 3 \\ 
\end{array}
\end{equation}
which only fails condition (\ref{eq:sufficient-cond-C-w}) when $i=1$. To see that $\mf{BP}(d,\ul{C};\ul{w})$ is infeasible, suppose by contradiction that $\mc{B}_{\bullet}$ is a solution. Since $C_6=3<w_5$ it follows that $\mc{B}_6=\{0^d\}$. Since $C_5=4d+1$ and $6-4>1$, it follows that $\mc{B}_5=\{4^d\}$. Similarly,
\begin{itemize}
\item since $C_4=4\cdot d + 3$, it follows that $\mc{B}_4=\{6^u 4^{d-u}\}$ with $u\leq 1$;
\item since $C_3=4\cdot d + 5$, it follows that $\mc{B}_3=\{6^v 4^{d-v}\}$ with $v\leq 2$;
\item since $C_2=6\cdot d-5$ it follows that $\mc{B}_2=\{6^t 4^{d-t}\}$ with $t\leq d-3$;
\item since $C_1=6\cdot d-1$ it follows that $\mc{B}_1=\{6^x 4^{d-x}\}$ with $x\leq d-1$.
\end{itemize} 
The total number of balls of weight $6$ is then $u+v+t+x\leq 2d-1$, a contradiction with the fact that $w_1=w_2=6$ each have to appear $d$ times.
\end{example}

Suppose that the weights are distinct, consider any index $1\leq i_0\leq n$, and replace condition (\ref{eq:sufficient-cond-C-w}) for $i=i_0$ with
\[C_{i_0}+\cdots + C_n \geq d\cdot(w_{i_0}+\cdots+w_n) + b(\ul{w}^{\geq i_0})-1.\]
We claim that there exists a sequence $\ul{C}$ satisfying the new set of relaxed conditions for which $\mf{BP}(d,\ul{C};\ul{w})$ is not $(i_0-1)$-feasible, and in particular it is not feasible. By Lemma~\ref{lem:feasibility-truncations}, this is equivalent to finding $(C_{i_0},\cdots,C_n)$ so that $\mf{BP}(d,\ul{C}^{\geq i_0};\ul{w}^{\geq i_0})$ is not feasible and (\ref{eq:sufficient-cond-C-w}) holds for $i\geq i_0$, since we can then choose $C_1,\cdots,C_{i_0-1}$ sufficiently large so that conditions (\ref{eq:sufficient-cond-C-w}) are satisfied when $i<i_0$. This reduces the problem to the case $i_0=1$, in which case the tuple $\ul{C}^{\circ}$ constructed in Section~\ref{sec:optimality} can be used: we know that $\mf{BP}(d,\ul{C};\ul{w})$ is infeasible, so we only need to check that conditions (\ref{eq:sufficient-cond-C-w}) are satisfied for $i>1$, or equivalently, that
\begin{equation}\label{eq:ineq-delta-circ}
 \Delta^{\circ}_i + \cdots + \Delta^{\circ}_n \geq b(\ul{w}^{\geq i})\mbox{ for }i>1.
\end{equation}
Since $h_k=1$ (and $k=n-1$), we have by (\ref{eq:formulas-Ccirc}) that $\Delta^{\circ}_j = g_j(\ul{w})-1$ for $j\geq 2$. Moreover, since $\ul{w}$ is strictly decreasing, we have that
\[ b(\ul{w}^{\geq i}) = \sum_{j=i+1}^n (g_j(\ul{w})-1) = \sum_{j=i+1}^n \Delta^{\circ}_j,\]
from which (\ref{eq:ineq-delta-circ}) follows.

\begin{remark}\label{rem:when-C-is-C0}
 Another important case when the construction $\ul{C}^{\circ}$ proves the optimality of the conditions (\ref{eq:sufficient-cond-C-w}) is when (using the notation in Section~\ref{sec:optimality})
 \[a_1\leq a_2\leq\cdots\leq a_k.\]
 The interested reader can check that (\ref{eq:ineq-delta-circ}) holds in this case, so the proof of the optimality of (\ref{eq:sufficient-cond-C-w}) follows as in the case of distinct weights.
\end{remark}

\section{Open questions}\label{sec:open}

The main question left open by this work is that of the optimality of the conditions (\ref{eq:sufficient-cond-C-w}) in Theorem~\ref{thm:sufficient-condition}. To formulate it precisely, we need some care in avoiding trivial counterexamples. For instance, when $n=2$ and $w_1=w_2=w$ the conditions (\ref{eq:sufficient-cond-C-w}) become
\[ C_2\geq d\cdot w,\quad C_1+C_2 \geq 2\cdot d\cdot w.\]
Since $C_1\geq C_2$, the first condition implies the second, so relaxing the second condition to $C_1+C_2 \geq 2\cdot d\cdot w-1$ leads to an equivalent set of conditions. Therefore, we call a \defi{strict relaxation} of (\ref{eq:sufficient-cond-C-w}) one for which the resulting set of solutions is strictly larger. With this convention, the optimality question becomes.

\begin{question}\label{que:optimality}
 Is it true that for an arbitrary $\ul{w}$, if we strictly relax the inequalities (\ref{eq:sufficient-cond-C-w}) then for $d\gg 0$ there exists a tuple of capacities $\ul{C}$ for which $\mf{BP}(d,\ul{C};\ul{w})$ is infeasible?
\end{question}

By generalizing the construction in Example~\ref{ex:C0-not-good} it can be shown that Question~\ref{que:optimality} has a positive answer when there are at most three distinct weights $w_i$. In view of Remark~\ref{rem:when-C-is-C0}, a first interesting case to consider is when $k=3$ and $a_1>a_2>a_3$. For instance, one could start by analyzing the following.

\begin{question}
 Are the inequalities (\ref{eq:sufficient-cond-C-w}) optimal for $\ul{w}=(10,9,9,6,6,0)$?
\end{question}

One can consider more generally the collection of all tuples $(b_1,\cdots,b_n)$ for which the conditions
\begin{equation}\label{eq:general-feasibility}
 C_i+\cdots+C_n \geq d\cdot(w_i+\cdots+w_n) + b_i
\end{equation}
guarantee that $\mf{BP}(d,\ul{C};\ul{w})$ is feasible when $d\gg 0$. These tuples form a poset ideal with respect to the natural partial order where $(b_1,\cdots,b_n)\leq(c_1,\cdots,c_n)$ if $b_i\leq c_i$ for all $i$. 

\begin{question}\label{que:optimal-tuples}
 What is the structure of the minimal elements in this poset ideal?
\end{question}

A different optimality question is concerned with finding the best bounds for $d$ with respect to the input data so that our conditions guarantee feasibility. 

\begin{problem}
 Determine the order of magnitude of optimal bounds $d_{\ul{w}}$ and $d_{\ul{w}}^1$ for Theorems~\ref{thm:sufficient-condition} and~\ref{thm:inductive-feasibility}.
\end{problem}

For the classical partitioning problem it is not usual to ask that each of the bins $\mc{B}_i$ contains the same number of balls, or that the number of balls coincides with the number of bins. A modification of $\mf{BP}(d,\ul{C};\ul{w})$ arises then by decoupling the number of weights from the number of bins, and considering instead the problem of assigning a collection containing $d_i$ balls of weight $w_i$ for $i=1,\cdots,m$, to $n$ bins $\mc{B}_1,\cdots,\mc{B}_n$, where $m,n$ and $w_i$ are fixed, and $d_1,\cdots,d_m\to\infty$, without restricting the number of balls that go into each bin.

\begin{problem}
 Find asymptotically optimal feasibility conditions for the partitioning problem with $m$ weights, $n$ bins, when $d_1,\cdots,d_m\gg 0$.
\end{problem}

\section*{Acknowledgments} 
The author would like to thank Bernd Sturmfels and Jens Vygen for helpful suggestions regarding the literature on partitioning and bin packing problems. Experiments with the computer algebra software Macaulay2 \cite{M2} have provided numerous valuable insights. The author acknowledges the support of the Alfred P. Sloan Foundation, and of the National Science Foundation Grant No.~1901886.


	\begin{bibdiv}
		\begin{biblist}

\bib{babel}{article}{
   author={Babel, Luitpold},
   author={Kellerer, Hans},
   author={Kotov, Vladimir},
   title={The $k$-partitioning problem},
   journal={Math. Methods Oper. Res.},
   volume={47},
   date={1998},
   number={1},
   pages={59--82},
   issn={1432-2994},
}

\bib{fernandez-lueker}{article}{
   author={Fernandez de la Vega, W.},
   author={Lueker, G. S.},
   title={Bin packing can be solved within $1+\varepsilon $ in linear time},
   journal={Combinatorica},
   volume={1},
   date={1981},
   number={4},
   pages={349--355},
   issn={0209-9683},
}

\bib{fil-agn}{article}{
   author={Filippi, Carlo},
   author={Agnetis, Alessandro},
   title={An asymptotically exact algorithm for the high-multiplicity bin
   packing problem},
   journal={Math. Program.},
   volume={104},
   date={2005},
   number={1, Ser. A},
   pages={21--37},
   issn={0025-5610},
}

\bib{goe-rot}{article}{
   author={Goemans, Michel X.},
   author={Rothvo\ss , Thomas},
   title={Polynomiality for bin packing with a constant number of item
   types},
   conference={
      title={Proceedings of the Twenty-Fifth Annual ACM-SIAM Symposium on
      Discrete Algorithms},
   },
   book={
      publisher={ACM, New York},
   },
   date={2014},
   pages={830--839},
}

\bib{M2}{article}{
          author = {Grayson, Daniel R.},
          author = {Stillman, Michael E.},
          title = {Macaulay 2, a software system for research
                   in algebraic geometry},
          journal = {Available at \url{http://www.math.uiuc.edu/Macaulay2/}}
        }
      
\bib{hall}{article}{
   author={Hall, P.},
   title={On Representatives of Subsets},
   journal={J. London Math. Soc.},
   volume={10},
   date={1935},
   number={1},
   pages={26--30},
}

\bib{hayes}{article}{
   author={Hayes, Brian},
   title={Computing Science: The Easiest Hard Problem},
   journal={American Scientist},
   volume={90},
   date={2002},
   number={2},
   pages={113--117},
}

\bib{hoc-sha}{article}{
   author={Hochbaum, Dorit S.},
   author={Shamir, Ron},
   title={Strongly Polynomial Algorithms for the High Multiplicity Scheduling Problem},
   journal={Operations Research},
   volume={39},
   date={1991},
   number={4},
   pages={648--653},
}
        
\bib{korte-vygen}{book}{
   author={Korte, Bernhard},
   author={Vygen, Jens},
   title={Combinatorial optimization},
   series={Algorithms and Combinatorics},
   volume={21},
   note={Theory and algorithms;
   Sixth edition},
   publisher={Springer, Berlin},
   date={2018},
   pages={xxi+698},
   isbn={978-3-662-56038-9},
   isbn={978-3-662-56039-6},
}

\bib{mc-sma-spi}{article}{
   author={McCormick, S. Thomas},
   author={Smallwood, Scott R.},
   author={Spieksma, Frits C. R.},
   title={Polynomial algorithms for multiprocessor scheduling with a small
   number of job lengths},
   conference={
      title={Proceedings of the Eighth Annual ACM-SIAM Symposium on Discrete
      Algorithms},
      address={New Orleans, LA},
      date={1997},
   },
   book={
      publisher={ACM, New York},
   },
   date={1997},
   pages={509--517},
}

\bib{mertens}{article}{
   author={Mertens, Stephan},
   title={The easiest hard problem: number partitioning},
   conference={
      title={Computational complexity and statistical physics},
   },
   book={
      series={St. Fe Inst. Stud. Sci. Complex.},
      publisher={Oxford Univ. Press, New York},
   },
   date={2006},
   pages={125--139},
}

\bib{raicu-sn-invariant}{article}{
   author={Raicu, Claudiu},
   title={Regularity of $\mathfrak{S}_n$-invariant monomial ideals},
   journal = {preprint},
   date={2019}
}

		\end{biblist}
	\end{bibdiv}

\end{document}